\newtheorem{thrm}{Theorem}[section]
\newtheorem{lem}[thrm]{Lemma}
\newtheorem{prop}[thrm]{Proposition}
\newtheorem{cor}[thrm]{Corollary}
\theoremstyle{definition}
\newtheorem{definition}[thrm]{Definition}
\newtheorem{question}{Question}
\newtheorem{remark}[thrm]{Remark}
\numberwithin{equation}{section}
\newtheorem{example}{Example}[section]
\newcommand{\R}{\mathbb R}
\newcommand{\N}{\mathbb N}
\newcommand{\C}{\mathbb{C}}
\newcommand{\Z}{\mathbb Z}
\newcommand{\p}{\partial}
\newcommand{\ds}{\displaystyle}
\newcommand{\la}{\lambda}
\newcommand{\ve}{\varepsilon}
\newcommand{\ov}{\overline}
\DeclareMathOperator{\spa}{span}
\DeclareMathOperator{\tr}{tr}
\DeclareMathOperator{\re}{Re}
\DeclareMathOperator{\im}{Im}
\author{Marcin Bownik}
\address{Department of Mathematics, University of Oregon}
\email{mbownik@uoregon.edu}
\author{Li-An Daniel Wang}
\address{Department of Mathematics and Statistics, Sam Houston State University}
\email{danielwang.hads@gmail.com}
\date{\today}
\thanks{The first author was partially supported by NSF grant DMS-1956395. He would like to thank Harmut F\"uhr for helpful discussions. The second author was supported by the departmental funds from Sam Houston State University, and would like to thank Pierre Portal for discussion on this topic at ICMAT in 2013.}
\keywords{anisotropic Hardy spaces, parabolic Hardy spaces, continuous groups of dilations}
\subjclass[2010]{42B30 (35K10, 42B25, 42B35)}
\begin{document}

\title[Hardy Spaces and PDE]{A PDE Characterization of Anisotropic Hardy Spaces}

\begin{abstract} We obtain a differential characterization for the anisotropic Hardy space $H_A^p$ by identifying it with a parabolic Hardy space associated with a general continuous group. This allows $H_A^p$ to be defined using a parabolic differential equation of 
Cald\'eron and Torchinsky. We also provide a classification of dilations corresponding to equivalent anisotropic Hardy spaces with respect to linear transformations. 
\end{abstract}
\maketitle

\section{Introduction and Main Results}
Classically, Hardy spaces $H^p$ can be characterized as boundary values to the heat or Poisson equations whose associated maximal operators are in $L^p$, and these kernels are fundamental solutions to the heat and Dirichlet equations, respectively. Even as the work of Fefferman and Stein \cite{MR0447953} led to the versatile employment of the grand maximal function, the study of Hardy spaces associated with various differential operators continue unabated, with each setting introducing its challenges as well as limitations in the various equivalent formulations of a classical Hardy space. To name just a few reference works, we have Hardy spaces associated with an operator $L$ satisfying a pointwise heat kernel bounds \cite{MR2163867}, satisfying an off-diagonal estimates \cite{auscher-martell06}, associated to divergence form elliptic operators \cite{MR2481054}, the Davies-Gaffney estimates \cite{MR2868142}, and reinforced off-diagonal estimates \cite{MR3108869}. These are further generalized in the settings of weighted Hardy spaces \cite{MR3624406, MR3815287}, Gaussian Hardy spaces \cite{Portal}, and variable exponent Hardy \cite{MR3938814} among many other contributions.

Amongst all this activity, the anisotropic Hardy space \cite{MR1982689} appears to be an outlier. With its formulation grounded in maximal, atomic, and molecular characterizations, and the fact that the scale of the dilation employed in these characterizations is discrete, the anisotropic setting has no obvious characterization from a differential operator standpoint. Our objective is to answer this question: Given an anisotropic Hardy space $H_A^p$ associated with an anisotropic dilation matrix $A$,  is there a partial differential equation (PDE) whose fundamental solution can be used to define $H_A^p$? Alternatively, can we characterize it as $H_L^p$ for some differential operator $L$? As we will see, the answer to the first question is yes, but given the parabolic nature of this PDE, the answer to the second question is no.  

If a PDE does exist to characterize the anisotropic Hardy space, then we must first find a setting in which the dilation scale is continuous. A natural setting turns out to be the parabolic Hardy space of Calder{\'o}n and Torchinsky \cite{MR0417687, MR0450888}, where the use of a continuous group of dilations $\{ A_t \}_{t > 0}$, satisfying $A_s A_t = A_{st}$, provides this needed structure, as well as a PDE we can immediately use. Strictly speaking, the original form of this group is not flexible enough to accommodate the geometry of an anisotropic dilation, where the natural geometric object are ellipsoids of changing eccentricities. Our first task, in light of this, is to consider a wider class of continuous groups that can capture the essential information from a dilation matrix in the anisotropic setting. This is the content of Theorem \ref{thm:expansive}. Next, we will show that every anisotropic dilation matrix $A$ is associated with a unique continuous group that represents the same geometric information. This is the content of Theorem \ref{thm:Construct-A_t}. Lastly, we see that the Hardy spaces associated with both types of dilations are precisely the same space. This is the content of Theorem \ref{thm:Hp-Hp}.

Theorem \ref{thm:Hp-Hp} also answers the question we posed. Once the anisotropic Hardy space $H_A^p$ is identified with the parabolic Hardy space $H_{\{ A_t \}}^p$, we have a PDE characterization of $H_A^p$. Let $D_t f(x) = f(A_t x)$ be the dilation operator induced by continuous group. Then we consider the initial value problem that seeks to find $u : \R_+^{n+1} \rightarrow \C$ so that
	\begin{align}\label{eq:CT}
		\begin{cases}
		\ds \frac{\p u}{\p t} = \frac{1}{t} (D_t^{-1} \Delta D_t) u & (x,t) \in \R_+^{n+1}= \R^n \times \R_+, \\
		\ds u(x, 0) = f(x) & x\in \R^n.
		\end{cases}
	\end{align}
A tempered distribution $f$ is in $H_A^p$ exactly when the solution $u(x, t) = f \ast \tilde \Phi_t (x)$ satisfies the regularity condition that
	\[ \sup_{\tilde{\rho}(x - y) < t} |(\tilde{\Phi}_t \ast f) (x)| \in L^p. \]
where $\tilde \Phi_t(x)=t^{-1}\Phi(A_t^{-1}x)$ is the fundamental solution to the PDE \eqref{eq:CT}, and $\tilde{\rho}$ is the quasi-norm associated with $\{ A_t \}$, see Proposition \ref{fund}.

The rest of the paper is devoted to a classification of anisotropic Hardy with respect to different choices of expansive dilations. This line of research was initiated by the first author in \cite{MR1982689} and extended to anisotropic Hardy spaces with variable anisotropy by Dekel, Petrushev, and Weissblat \cite{dpw}. Recently, Cheshmavar and F\"{u}hr \cite{chesh-fuhr} have given a classification of anisotropic Besov spaces \cite{besov} associated with anisotropic dilation matrices by describing when two such  matrices induce the same scale of Besov spaces. At the same time they have found an incorrect statement in the corresponding classification theorem for anisotropic Hardy space \cite[Theorem 10.3]{MR1982689}, which we correct and expand. In particular, we obtain a refinement of classification of anisotropic Hardy spaces, which are equivalent 
with respect to linear transformations.

The paper is organized as follows. In Section \ref{S2} we provide the anisotropic and parabolic settings. In Section \ref{S3} we prove our main theorems, Theorem \ref{thm:Construct-A_t} and Theorem \ref{thm:Hp-Hp}, linking anisotropic and parabolic Hardy spaces. In Section \ref{S4} we provide the classification of anisotropic Hardy spaces by correcting and expanding the earlier results in \cite{MR1982689}. Lastly, in Section \ref{S5} we make further comments and observations about PDE \eqref{eq:CT}, its related differential operator $L$, and where the anisotropic Hardy space fits in the overall literature of Hardy spaces of various operators.

For the remainder of the paper, we will use $c$ and $C$ as general constants that may depend on the dimension $n$ or underlying geometry (anisotropic dilation matrix $A$ or expansive group $\{ A_t \}$), but do not depend on the function $f$ in question. The Schwartz class is denoted by $\mathcal{S}$, the tempered distributions denoted by $\mathcal{S}'$, and a generic test function $\varphi \in \mathcal{S}$ will be chosen so that $\int \varphi \neq 0$.

\section{Anisotropic and Parabolic Hardy Spaces}\label{S2}

\subsection{Anisotropic Setting} We now introduce the anisotropic anisotropic setting from \cite{MR1982689}, and the facts needed to state and prove our results. We say a real-valued $n \times n$ matrix $A$, is an anisotropic dilation matrix if all of its eigenvalues $\lambda$, real or complex, satisfy $|\lambda| > 1$. We can construct a (non-unique) homogeneous quasi-norm, that is, a measurable mapping $\rho_A : \R^n \rightarrow [0, \infty)$ with a constant $c$ satisfying:
        \begin{center}
        \begin{tabular}{llll}
        $\rho_A (x) = 0$
            &   $\iff$    &   $x = 0$, & \\
        $\rho_A (Ax) = b\rho(x)$
            &   for all         &   $x \in \R^n$, &   where $b:=|\det A|$, \\
        $\rho_A(x + y) \leq c(\rho_A(x) + \rho_A(y))$  &  for all & $x, y \in \R^n$. & \\
        \end{tabular}
        \end{center}
We denote $(A, \rho_A)$ to mean $\rho_A$ is a quasi-norm associated to $A$, and with $dx$ denoting the Lebesgue measure, the triplet $(\R^n, dx, \rho_A)$ is a space of homogeneous type.

To illustrate the difference between a quasi-norm and the Euclidean norm, recall that the Euclidean ball $B(x, r)$, centered at $x \in \R^n$ with radius $r$, has the nice property that whenever $r_1 < r_2$, we have $B(x, r_1) \subset B(x, r_2)$. But for a dilation matrix $A$, we do not expect $B(x, r) \subset A(B(x, r))$. Instead, one can construct `canonical' ellipsoids $\{ B_k \}_{k \in \Z}$, associated with $A$, such that for all $k \in \Z$, if we define $B_{k + 1} = A(B_k)$, then we have nested ellipsoids $B_k \subseteq B_{k + 1}$, and $|B_k| = b^k$. These nested ellipsoids will serve as the basic geometric object in the anisotropic setting. Moreover, we can use these ellipsoids to define a `step' norm associated with $A$ as follows:
    \begin{align*}
    \rho_A(x) =
            \begin{cases}
            b^j     &\textrm{ if } x \in B_{j + 1} \backslash B_j \\
            0       &\textrm{ if } x = 0.
            \end{cases}
    \end{align*}
By setting $\omega$ to be the smallest integer so that $2B_0 \subset A^{\omega} B_0 = B_{\omega}$, $\rho_A$ is a quasi-norm with the triangle inequality constant $c = b^{\omega}$. While any two quasi-norms associated with $A$ will give the same anisotropic structure {\cite[Lemma 2.4]{MR1982689}}, the step norm will be our `canonical' norm, denoted by $\rho_A$.


%


A general quasi-norm (without being associated to a dilation) is a mapping $\rho : \R^n \rightarrow [0, \infty)$ satisfying $\rho(x) = 0$ exactly when $x = 0 \in \R^n$, and for some $c > 0$, satisfies the inequality $\rho(x + y) \leq c (\rho(x) + \rho(y))$ for all $x, y \in \R^n$. Then if $\rho_1$ and $\rho_2$ be two quasi-norms on $\R^n$, we say they are equivalent if there exists $C > 0$ such that for all $x \in \R^d$,
	\[ \frac{1}{C} \rho_1 (x) \leq \rho_2 (x) \leq C \rho(x). \]
\begin{definition}\label{d21}
We say two dilations $(A_1, \rho_1)$ and $(A_2, \rho_2)$ are equivalent if their associated quasi-norms $\rho_1$ and $\rho_2$ are equivalent.
\end{definition}

We are now in a position to define Hardy spaces adapted to the geometry of dilations. We denote $\mathcal{S}$ as the Schwartz class, and $\mathcal{S}'$ the space of tempered distributions. For a dilation $A$, $\varphi \in \mathcal{S}$, and $k \in \Z$, we denote the anisotropic dilation by $\varphi_k (x) = b^{-k} \varphi(A^{-k} x)$. We have four maximal functions, corresponding to their classical counterparts, any one of which can be used to define $H_A^p$. These are  radial and non-tangential maximal functions and the corresponding grand maximal functions, defined for $f \in \mathcal{S}'$, respectively,
    \begin{align*}
    M_{\varphi}^0 f(x)
    	&= \sup_{k \in \Z} |f \ast \varphi_k (x)|, \\
	M_{\varphi} f(x)	
		&= \sup_{k \in \Z} \ \sup_{\rho(x - y) < b^{-k}} |f \ast \varphi_k (y)|, 
\\
	M^0_{\mathcal S_N} f(x) 
		&= \sup_{\varphi \in \mathcal{S}_N} M^0_{\varphi} f(x),		
		\\
	M_{\mathcal S_N} f(x) 
		&= \sup_{\varphi \in \mathcal{S}_N} M_{\varphi} f(x).
		\end{align*}
Here, $\mathcal{S}_N$, $N \in \N$, denotes the set of all $\varphi \in \mathcal{S}$ such that
	\[ \| \varphi \|_{\mathcal{S}_N} = \sup_{x \in \R^n} \sup_{|\alpha| \leq N}   |\p^{\alpha} \varphi(x)| \rho_A(x)^N  \leq 1. \]

\begin{thrm} {\cite[Proposition 3.10 and Theorem 7.1]{MR1982689}}
\label{sf}
 Let $A$ be a dilation, $p \in (0, \infty)$, $\varphi \in \mathcal{S}$ be such that $\int \varphi \neq 0$, and $N \in \N$ be sufficiently large. If $f \in \mathcal{S}'$, then the following are equivalent:
	\begin{align*}
	\ M_{\varphi}^0 f \in L^p, \qquad M_{\varphi} f \in L^p, \qquad M^0_{\mathcal S_N} f \in L^p, \qquad M_{\mathcal S_N} f \in L^p.
	\end{align*}
In this case, we say $f \in H_A^p$.
\end{thrm}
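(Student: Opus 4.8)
The plan is to carry out the Fefferman--Stein comparison of maximal functions, adapted to the space of homogeneous type $(\R^n, dx, \rho_A)$, and to reduce the whole equivalence to one nontrivial inequality. First I would record the elementary pointwise estimates. Setting $y = x$ in the non-tangential supremum gives $M_\varphi^0 f \le M_\varphi f$; after normalizing $\varphi$ so that $\|\varphi\|_{\mathcal{S}_N} \le 1$ we have $M_\varphi^0 f \le M^0_{\mathcal{S}_N} f$ and $M_\varphi f \le M_{\mathcal{S}_N} f$; and $M^0_{\mathcal{S}_N} f \le M_{\mathcal{S}_N} f$ because the radial supremum is a restriction of the non-tangential one. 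These yield the pointwise chain
$$ M_\varphi^0 f \le M_\varphi f \le M_{\mathcal{S}_N} f, \qquad M_\varphi^0 f \le M^0_{\mathcal{S}_N} f \le M_{\mathcal{S}_N} f, $$
so $M_\varphi^0 f$ is the smallest and $M_{\mathcal{S}_N} f$ the largest of the four quantities. It therefore suffices to prove the single reverse bound $\|M_{\mathcal{S}_N} f\|_p \le C \|M_\varphi^0 f\|_p$ for $N$ large, which closes the loop and forces all four quantities to be finite and comparable simultaneously.

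The engine of that bound is an anisotropic Peetre-type tangential maximal function. For $L > 0$ I would set
$$ M_{\varphi, L}^{**} f(x) = \sup_{k \in \Z}\ \sup_{y \in \R^n} \frac{|f \ast \varphi_k(y)|}{\big(1 + b^{k}\rho_A(x - y)\big)^{L}}. $$
On the cone $\rho_A(x - y) < b^{-k}$ the denominator is comparable to $1$, so $M_\varphi f \le C\, M_{\varphi, L}^{**} f$ pointwise, and it is enough to control the tangential function. The crux is the pointwise domination
$$ \big(M_{\varphi, L}^{**} f(x)\big)^{r} \le C\, M_{\mathrm{HL}}\!\big((M_\varphi^0 f)^{r}\big)(x), $$
where $M_{\mathrm{HL}}$ is the Hardy--Littlewood maximal operator on $(\R^n, dx, \rho_A)$, valid once $L$ is large and $r \in (0, p)$ is chosen with $Lr$ above a threshold fixed by $p$ and the homogeneous dimension of $\rho_A$. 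Granting it, pick $r < p$; then $p/r > 1$, $M_{\mathrm{HL}}$ is bounded on $L^{p/r}$, and taking $L^p$ norms after raising to the power $1/r$ gives $\|M_{\varphi, L}^{**} f\|_p \le C \|M_\varphi^0 f\|_p$.

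To pass from a single test function to the grand maximal function I would use a reproducing formula generated by $\varphi$. Since $\int \varphi \ne 0$, after normalizing $\widehat{\varphi}(0) \ne 0$ a Calder\'on-type construction produces, for each $\psi \in \mathcal{S}_N$, a family $\{\eta^{(j)}\}_{j \ge 0} \subset \mathcal{S}$ with
$$ \psi = \sum_{j \ge 0} \eta^{(j)} \ast \varphi_j, \qquad \|\eta^{(j)}\|_{\mathcal{S}_M} \le C\, b^{-j\ve}, $$
where the bounds are uniform over $\psi \in \mathcal{S}_N$ and the geometric decay rate $\ve$ can be made as large as desired by taking $N$ large. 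Dilating the identity and using $(g \ast h)_k = g_k \ast h_k$ gives
$$ f \ast \psi_k = \sum_{j \ge 0} \big(f \ast \varphi_{j + k}\big) \ast \eta^{(j)}_k. $$
Estimating each convolution at a point $y$ in the aperture of level $k$ against the weight defining $M_{\varphi, L}^{**} f$, the decay of $\eta^{(j)}$ absorbs both the growth of the tangential weight across the scale shift $j$ and the sum in $j$, yielding $M_{\mathcal{S}_N} f(x) \le C\, M_{\varphi, L}^{**} f(x)$. Chaining the three bounds closes the loop.

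The main obstacle is the pointwise estimate of the second paragraph. Its proof rests on an anisotropic sub-mean-value inequality: one expresses $f \ast \varphi_k$ at a point as a controlled superposition of values of $f \ast \varphi_{k'}$ at nearby points and comparable scales---again via the reproducing identity, trading smoothness of $\varphi$ for decay---and then applies the elementary inequality with exponent $r < 1$ to linearize the regime $p < 1$ and turn the resulting sum over scales and translates into a single Hardy--Littlewood average. Making $N$, and hence $L$ and $\ve$, large relative to $p$ and the homogeneous dimension is exactly what forces the geometric series and the tail integrals to converge. A second, more technical point is the a priori finiteness of the suprema: the pointwise inequalities are first derived under the qualitative hypothesis that $M_{\varphi, L}^{**} f$ is finite a.e.\ (legitimate for $f \in \mathcal{S}'$ on the relevant region), after which a truncation argument removes it. This bookkeeping is routine but unavoidable, and it is where the doubling of the ellipsoids $B_k$ and the homogeneous-type structure are used repeatedly.
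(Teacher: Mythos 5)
This theorem is imported from \cite{MR1982689} and the present paper gives no proof of its own; measured against the source proof (Proposition 3.10 and Theorem 7.1 of \cite{MR1982689}), your outline is correct and follows essentially the same Fefferman--Stein scheme: the trivial pointwise chain reducing everything to $\|M_{\mathcal S_N}f\|_{L^p}\lesssim\|M^0_\varphi f\|_{L^p}$, the anisotropic Peetre tangential maximal function dominated by $\bigl(M_{\mathrm{HL}}((M^0_\varphi f)^r)\bigr)^{1/r}$ with $r<p$, the Calder\'on reproducing formula to control the grand maximal function, and truncation to secure the a priori finiteness needed for the absorption step. Nothing essential is missing from the outline.
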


We will also use the following fact.

\begin{thrm}{\cite[Theorem 10.5]{MR1982689}}
\label{eqq}
Let $A_1$ and $A_2$ be two dilations. Then, $A_1$ and $A_2$ are equivalent if and only the corresponding anisotropic Hardy spaces coincide $H^p_{A_1}=H^p_{A_2}$ for some $0<p \le 1$. In such case they coincide for all $0<p \le 1$.
\end{thrm}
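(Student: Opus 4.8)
The plan is to prove the two implications separately and then read off the ``for all $p$'' clause from the fact that equivalence of dilations is itself a $p$-free condition.

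\textbf{Sufficiency.} Suppose $(A_1,\rho_1)$ and $(A_2,\rho_2)$ are equivalent in the sense of Definition \ref{d21}, so $C^{-1}\rho_1\le\rho_2\le C\rho_1$. Here I would pass to the atomic characterization of $H^p_A$ (also from \cite{MR1982689}) rather than the maximal one, since the dilations $A_1^{-k}$ and $A_2^{-k}$ enter the definitions of $\varphi_k$ differently and are not transparently comparable, whereas atoms only see support, size, and moments. Because the level sets of equivalent quasi-norms are nested up to a bounded shift in the ellipsoid index, each $A_1$-ellipsoid $B^{(1)}_k$ lies inside an $A_2$-ellipsoid $B^{(2)}_{k'}$ of comparable measure, and conversely; moreover the measure law $|\{\rho_i<t\}|\approx t$ is shared by both, and the vanishing-moment conditions, being purely polynomial, are identical. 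Hence every $(p,\infty,s)$-atom for $A_1$ is a bounded multiple of one for $A_2$ and vice versa, with the admissible order $s$ chosen common to both (equivalent quasi-norms force the same homogeneity exponents). This yields $H^p_{A_1}=H^p_{A_2}$ with equivalent quasi-norms, for every $p\in(0,\infty)$.

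\textbf{Necessity.} Assume $H^p_{A_1}=H^p_{A_2}$ as sets for some fixed $p\in(0,1]$. Both are complete quasi-normed spaces continuously embedded in $\mathcal{S}'$, so if $f_n\to f$ in one and $f_n\to g$ in the other then $f=g$; the closed graph theorem for $F$-spaces therefore upgrades the set equality to equivalence of the two Hardy quasi-norms. I would then argue by contraposition: supposing $\rho_1\not\approx\rho_2$, say (by symmetry) that $\rho_2/\rho_1$ is unbounded, I would exhibit a family normalized in $H^p_{A_1}$ whose $H^p_{A_2}$-norms diverge. Fix $\psi\in\mathcal{S}$ with enough vanishing moments that $\psi\in H^p_{A_1}\cap H^p_{A_2}$ and set $f_k=b_1^{-k/p}\psi(A_1^{-k}\,\cdot)$. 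Exact dilation scaling of the Hardy norm gives $\|f_k\|_{H^p_{A_1}}=\|\psi\|_{H^p_{A_1}}$ for all $k$, so the assumed norm equivalence forces $\sup_k\|f_k\|_{H^p_{A_2}}<\infty$. The contradiction should come from bounding $\|f_k\|_{H^p_{A_2}}\approx\|M^0_\varphi f_k\|_{L^p}$ (Theorem \ref{sf}, taken with respect to $A_2$) from below: $f_k$ is a bump of height $\approx b_1^{-k/p}$ living on $A_1^k(\supp\psi)$, and when $\rho_1\not\approx\rho_2$ this support is increasingly elongated relative to the $A_2$-ellipsoids, so its $A_2$-radial maximal function stays large on an $A_2$-region whose measure outgrows $b_1^{k}$.

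\textbf{Main obstacle.} The crux is exactly this lower bound, i.e.\ showing that a bump perfectly adapted to the $A_1$-geometry is genuinely bad for $A_2$ and $\|M^0_\varphi f_k\|_{L^p}\to\infty$. This needs a quantitative analysis of how $\{B^{(1)}_k\}$ and $\{B^{(2)}_j\}$ interleave when the quasi-norms fail to be equivalent: since $A_1$ and $A_2$ need not commute, the failure may surface only asymptotically in $k$, and one must locate the scale $j$ at which $\varphi^{(2)}_j$ best matches $f_k$ and then control the measure of the set on which $|f_k\ast\varphi^{(2)}_j|\gtrsim b_1^{-k/p}$. Once necessity is in hand, equivalence of $A_1$ and $A_2$ is a statement about $\rho_1,\rho_2$ alone and carries no reference to $p$, so the sufficiency direction returns $H^p_{A_1}=H^p_{A_2}$ for every $p\in(0,1]$, which establishes the final ``in such case'' clause.
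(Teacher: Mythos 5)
First, a point of reference: the paper does not prove this statement. Theorem \ref{eqq} is imported verbatim from \cite[Theorem 10.5]{MR1982689} and used as a black box (notably in the proof of Theorem \ref{thm:Hp-Hp}), so there is no in-paper argument to compare yours against; your proposal has to stand on its own. Judged that way, your sufficiency direction is viable: passing to the atomic characterization and using the equivalence of $\rho_1$ and $\rho_2$ to nest each $B^{(1)}_k$ inside some $B^{(2)}_{k'}$ of comparable measure does show that every $A_1$-atom is a bounded multiple of an $A_2$-atom, whence $H^{p,\infty,s}_{A_1}=H^{p,\infty,s}_{A_2}$ with comparable quasi-norms. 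One repair: you should not lean on the parenthetical claim that ``equivalent quasi-norms force the same homogeneity exponents'' (this would itself require proof, and the minimal admissible moment order depends on eigenvalue data, not only on the quasi-norm up to equivalence). The standard fix is simply to take $s$ to be at least the maximum of the two minimal admissible orders, which is permissible since $H^{p,\infty,s}_A=H^p_A$ for every admissible $s$.

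The necessity direction, however, contains a genuine gap, and you name it yourself. The entire content of that implication is the lower bound $\|f_k\|_{H^p_{A_2}}\to\infty$ for the $A_1$-normalized family $f_k=b_1^{-k/p}\psi(A_1^{-k}\cdot)$, and you do not prove it. This is not a routine verification: for $p\le 1$ the function $\psi$ must carry vanishing moments, so $f_k\ast\varphi^{(2)}_j$ has cancellation, and a bound of the form $\int (M^{0}_\varphi f_k)^p\gtrsim b_1^{-k}|E_k|$ with $|E_k|/b_1^{k}\to\infty$ requires exhibiting a set $E_k$ on which the $A_2$-maximal function does not collapse --- precisely the quantitative interleaving of $\{B^{(1)}_k\}$ and $\{B^{(2)}_j\}$ that you defer. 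Moreover, before any such geometric estimate can be run, the hypothesis $\rho_1\not\approx\rho_2$ must be converted into a usable matrix statement; the natural tool is Lemma \ref{eq}, i.e., unboundedness of $\|A_1^k A_2^{-\lfloor\epsilon k\rfloor}\|$, which your sketch never invokes. As written, the proposal establishes one implication and a reduction of the other, not the theorem. The closed-graph upgrade from set equality to equivalence of quasi-norms, and the observation that the ``for all $p$'' clause follows because equivalence of dilations is a $p$-free condition, are both fine.
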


The anisotropic setting was motivated by wavelet theory, thus the parameter associated with dilation $A$ is the discrete $k \in \Z$. This causes an immediate issue when one seeks to find a differential operator $L$ or a semigroup $\{ T_t \}_{t>0}$ that would characterize $H_A^p$. That is, $T_t f$ solves the Cauchy problem $\frac{\p u}{\p t} = Lu$ and $u(x, 0) = f(x)$, with $Lu = \lim_{t \rightarrow 0^+} \frac{T_t u - u}{t}$. 
To overcome this obstacle, we will instead relate the anisotropic dilation matrix  with continuous groups $\{ A_t \}_{t > 0}$.  While this setting has been studied by a number of authors, our approach is informed by  Calder{\'o}n and Torchinsky \cite{MR0417687, MR0450888} and Stein and Wainger \cite{stein1978}.

\subsection{Parabolic Setting}
A continuous group $\{ A_t \}_{t > 0}$ is a collection of linear operators $A_t : \R^n \rightarrow \R^n$ such that for all $s, t > 0$, it satisfies the algebraic identity $A_t A_s = A_{st}$ and the mapping $t\mapsto A_t x$ is continuous for all $x\in \R^n$. This guarantees the existence of a unique $n \times n$ matrix $P$, which we call the generator of $\{ A_t \}$, such that
	\[ A_t = \exp(P \ln t), \qquad t>0. \]
Conversely, given any real-valued $n \times n$ matrix $P$, we can construct a continuous group given by the above exponential formula.

In \cite{MR0417687}, the continuous group considered carries the requirement that there are $c_1, c_2 \geq 1$ such that such that for all $t > 1$ and $x \in \R^n$,
	\begin{equation}\label{CT}
	t^{c_1} |x| \leq |A_t x| \leq t^{c_2} |x|,
	\end{equation}
which forces the Euclidean ball to be invariant under $A_t$ for all $t>1$. However, this is not sufficient to capture the setting of an anisotropic dilation, where such invariance might fail for $t>1$ close to $1$.
Instead, the natural geometric objects are ellipsoids of changing {eccentricities} whose major axes do not stay fixed. Because of this, we will work with a more general collection of semigroups $\{ A_t \}$, characterized by the following theorem.

\begin{thrm}\label{thm:expansive} Given a continuous group $\{ A_t \}_{t > 0}$ with generator $P$, the following are equivalent.
	\begin{enumerate}	
	\item[(a)] For all $x \in \R^n$, $\ds \lim_{t \rightarrow 0} |A_t x| = 0$.
	
	\item[(b)] All eigenvalues $\lambda$ of $P$ satisfy $\operatorname{Re}(\lambda) > 0$.
	
	\item[(c)] There exists $t_0 > 1$ such that for all $x \in \R^n \setminus \{0\}$, $|A_{t_0} x| > |x|$. 
\end{enumerate}
\end{thrm}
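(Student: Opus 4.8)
The plan is to pass from the multiplicative parameter $t$ to the additive one by setting $B_s := A_{e^s} = \exp(Ps)$ for $s \in \R$, which turns $\{A_t\}$ into the standard one-parameter matrix group generated by $P$ and reduces everything to classical linear-stability theory for $\dot y = Py$. Under this substitution condition (a) reads $\lim_{s \to -\infty} |B_s x| = 0$ for all $x$, and condition (c) reads: there is $s_0 > 0$ with $|B_{s_0} x| > |x|$ for all $x \neq 0$ (take $s_0 = \ln t_0$). I would then establish the cycle by proving $\text{(a)} \Leftrightarrow \text{(b)}$ and $\text{(b)} \Leftrightarrow \text{(c)}$.

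For $\text{(b)} \Rightarrow \text{(a)}$ I would invoke the real Jordan decomposition of $P$: each block attached to an eigenvalue $\lambda$ contributes entries of size $|s|^k e^{\operatorname{Re}(\lambda)s}$, so if $\operatorname{Re}(\lambda) > 0$ for every $\lambda$ then $\|B_s\| \leq C(1 + |s|^{n-1}) e^{a s} \to 0$ as $s \to -\infty$, where $a = \min_\lambda \operatorname{Re}(\lambda) > 0$; hence $|B_s x| \leq \|B_s\|\,|x| \to 0$. For the converse I argue by contrapositive: if some eigenvalue satisfies $\operatorname{Re}(\lambda) \leq 0$, I pick a real invariant subspace of dimension one or two attached to $\lambda$ and a nonzero $x$ in it. There the dynamics are $e^{\operatorname{Re}(\lambda)s}$ times a bounded invertible (rotational) factor, so $|B_s x| \gtrsim e^{\operatorname{Re}(\lambda)s}|x|$, which stays bounded away from $0$ as $s \to -\infty$; thus (a) fails.

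The delicate direction is $\text{(b)} \Rightarrow \text{(c)}$. The naive attempt to differentiate $\varphi(s) = |B_s x|^2$ gives $\varphi'(s) = y^T(P + P^T)y$ with $y = B_s x$, and $P + P^T$ need \emph{not} be positive definite merely because the spectrum of $P$ lies in the right half-plane. The remedy is a Lyapunov inner product: since $-P$ is Hurwitz, the Lyapunov equation $P^T Q + Q P = I$ has a unique solution $Q = Q^T > 0$ (explicitly $Q = \int_0^\infty e^{-P^T s} e^{-Ps}\,ds$). Working with $\|y\|_Q^2 = y^T Q y$ yields $\frac{d}{ds}\|B_s x\|_Q^2 = |B_s x|^2 \geq \|Q\|^{-1}\|B_s x\|_Q^2$, so Gr\"onwall gives $\|B_s x\|_Q \geq e^{s/(2\|Q\|)}\|x\|_Q$. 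Transferring back to the Euclidean norm through the equivalence $c_1|x| \leq \|x\|_Q \leq c_2|x|$ costs a fixed factor $c_1/c_2$, so I choose $s_0$ large enough that $(c_1/c_2)\,e^{s_0/(2\|Q\|)} > 1$; then $|B_{s_0} x| > |x|$ for all $x \neq 0$, i.e. (c) holds with $t_0 = e^{s_0}$. This is precisely why (c) only asserts the existence of \emph{some} $t_0$ rather than validity for every $t_0 > 1$.

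Finally $\text{(c)} \Rightarrow \text{(b)}$ is clean: $|B_{s_0} x| > |x|$ for all real $x \neq 0$ says $B_{s_0}^T B_{s_0} - I$ is positive definite, i.e. the smallest singular value satisfies $\sigma_{\min}(B_{s_0}) > 1$. For any (complex) eigenvalue $\mu$ of $B_{s_0} = e^{Ps_0}$ with eigenvector $v$, one has $|\mu|\,\|v\| = \|B_{s_0}v\| \geq \sigma_{\min}(B_{s_0})\|v\| > \|v\|$, so $|\mu| > 1$; since $\mu = e^{\lambda s_0}$ for an eigenvalue $\lambda$ of $P$ and $s_0 > 0$, this forces $\operatorname{Re}(\lambda) > 0$, which is (b). The main obstacle throughout is the $\text{(b)} \Rightarrow \text{(c)}$ step, where the mismatch between the spectral condition on $P$ and the fixed Euclidean geometry must be bridged by the Lyapunov change of inner product together with a sufficiently large choice of $t_0$.
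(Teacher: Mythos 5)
Your proposal is correct, but it routes the equivalences differently from the paper and uses a different key tool in the hardest step. The paper proves the cycle $(a)\Rightarrow(b)\Rightarrow(c)\Rightarrow(a)$: for $(a)\Rightarrow(b)$ it evaluates $|A_t x_\lambda|=t^{\operatorname{Re}\lambda}|x_\lambda|$ on an eigenvector directly (rather than your contrapositive via a one- or two-dimensional real invariant subspace, which amounts to the same computation); for $(b)\Rightarrow(c)$ it applies the spectral radius formula to $B=A_{1/2}=\exp(-P\ln 2)$, whose eigenvalues all have modulus $<1$, to get $\|B^k\|<1$ for some $k$ and then takes $t_0=2^k$; and for $(c)\Rightarrow(a)$ it uses compactness of the sphere to upgrade $(c)$ to $|A_{t_0}x|\ge c_0|x|$ with $c_0>1$ and iterates. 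You instead prove $(a)\Leftrightarrow(b)$ and $(b)\Leftrightarrow(c)$, and your $(b)\Rightarrow(c)$ replaces the spectral radius argument by the Lyapunov equation $P^TQ+QP=\mathbf I$ with $Q=\int_0^\infty e^{-P^Ts}e^{-Ps}\,ds$ plus Gr\"onwall; note that this $Q$ is precisely the ``norm-inducing matrix'' the paper introduces later in \eqref{alg} to build the homogeneous quasi-norm, so your argument front-loads machinery the paper defers, at the cost of being less elementary than the spectral radius route. Your $(c)\Rightarrow(b)$ via $\sigma_{\min}(B_{s_0})>1$ is clean and valid. One small point to make explicit in your $(b)\Rightarrow(a)$ direction: the bound $\|B_s\|\le C(1+|s|^{n-1})e^{as}$ with $a=\min_\lambda\operatorname{Re}\lambda$ requires passing through the (possibly complex) Jordan form and absorbing the conjugating matrices into $C$, which is routine but worth a sentence.
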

\begin{definition} A continuous group $\{ A_t \}_{t > 0}$ that satisfies the conditions from Theorem \ref{thm:expansive} is called an expansive continuous group.
\end{definition}

\begin{remark} This characterization has appeared numerous times in literature, under various names. In Stein and Wainger \cite[Part II]{stein1978}, these groups were simply called dilations, and it was stated that condition $(a)$ implied $(b)$, while in \cite[page 126]{MR2473135}, it was stated that $(b)$ implied $(a)$. While the proof of either direction only requires elementary arguments in linear algebra, we provide them here for reference. While we will not need condition (c) in the rest of the paper, we include it for completeness.
\end{remark}

\begin{proof}[Proof of Theorem \ref{thm:expansive}]
We first establish the implication $(a) \Rightarrow (b)$. Let $\lambda$ be an eigenvalue of $P$. We write $\lambda = \lambda_r + i \lambda_i$, where $\lambda_r$ and $\lambda_i \in \R$ are the real and complex parts of $\lambda$, respectively. Let $x_{\lambda} \in \C^n$ be the associated eigenvector, also potentially with complex entries, so we write
		\[ x_{\lambda} = x_r + ix_i, \]
	where $x_r$ and $x_i$ are both vectors in $\R^n$. Using the relationship $A_t = \exp(P \ln t)$ for $t > 0$, we have
		\begin{align*}
		|A_t x_{\lambda}|
		&= |\exp(P \ln t) x_{\lambda}| = |\exp(\lambda \ln t) x_{\lambda}| = |t^{\lambda} x_{\lambda}| = |t^{\lambda_r} t^{i\lambda_i} x_{\lambda}| = t^{\lambda_r} |x_{\lambda}|.
		\end{align*}
	We used the fact that for any $t > 0$, $|t^{i{\lambda_i}}| = 1$. Since $A_t x_{\lambda} \rightarrow 0$ as $t\to 0$, we have $\lambda_r > 0$.

	Next, we establish $(b) \Rightarrow (c)$.  Suppose that all eigenvalues of $P$ have positive real parts. Define the matrix $B=A_{1/2}=\exp(-P \ln 2)$. If $\lambda$ is an eigenvalue of $P$, then $2^{-\lambda}$ is an eigenvalue of $B$. Hence, all eigenvalues $\lambda$ of $B$ satisfy $|\lambda|<1$. By the spectral radius formula $\lim_{k\to \infty} ||B^k||^{1/k}<1$. Hence, there exists $k\in \N$ such that $||B^k||<1$. Let $t_0=2^k$. Since $A_{1/t_0}=B^k$, we have
	\[
	|x| \le ||A_{1/t_0}|| |A_{t_0} x| = ||B^k|| |A_{t_0} x| \qquad\text{for }x\in \R^n.
	\]
Hence, $(c)$ follows.
	
Finally, we establish $(c) \implies (a)$. Suppose that $(c)$ holds for some $t_0$. We will first show that there exists $c_0 > 1$ so that for all $x \in \R^n$,
		\begin{align}\label{def:expansive-c0}
		|A_{t_0} x| \ge  c_0 |x|.
		\end{align}
Indeed, the function $x \mapsto |A_{t_0}x|$ defined on the unit sphere $S^{n-1} \subset \R^n$ achieves a minimum value $c_0$. By $(c)$ we deduce that $c_0>1$. Hence, by the homogeneity we have \eqref{def:expansive-c0}. Applying \eqref{def:expansive-c0} recursively we have for any $k\in \N$,
\[
|A_{(t_0)^{-k}} x| \le (c_0)^{-1} |A_{(t_0)^{-k+1}} x| \le \ldots \le (c_0)^{-k} |x|.
\]
Letting $k\to \infty$ yields (a).
\end{proof}

\subsection{Homogeneous quasi-norms}
In analogy to discrete anisotropic setting we adapt the following definition of a homogeneous quasi-norm.

\begin{definition} Let $\{ A_t \}_{t > 0}$ be an expansive continuous group. We say $\tilde{\rho}: \R^n \rightarrow [0, \infty)$ is a homogeneous quasi-norm with respect to $\{ A_t \}$ if there exists $c>0$ such that
    \begin{center}
        \begin{tabular}{llll}
        $\tilde \rho(x) = 0$
            &   $\iff$    &   $x = 0$,  \\
        $\tilde \rho (A_t x) = t\tilde \rho(x)$
            &   for all         &   $x \in \R^n$, $t>0$, \\
        $\tilde \rho(x + y) \leq c(\tilde\rho(x) + \tilde\rho(y))$  &  for all & $x, y \in \R^n$. & \\
        \end{tabular}
        \end{center}
\end{definition}

The following construction yields a particularly useful example of a homogeneous quasi-norm. Let $P$ be a generator of an expansive group $\{ A_t \}_{t > 0}$. Then, there exists a positive definite symmetric matrix $B$ that can be defined algebraically \cite[Lemma 1.2]{MR0417687} by 
\begin{equation}\label{alg}
BP + P^*B = \mathbf I,
\end{equation} 
or by the integral \cite[Proposition 1-7]{stein1978}
	\[ B = \int_0^{\infty} \exp(-tP^*)\exp(-t P) dt. \]
The matrix $B$ satisfies the identity
\[	\frac{d}{dt} \langle B A_t x, A_t x \rangle = \frac{1}{t} \langle (BP + P^*B) A_t x, A_t x \rangle = \frac{1}{t} \langle A_t x, A_t x \rangle>0.
\]
Hence, for any $x \in \R^n$, the quantity $\langle B A_t x, A_t x \rangle$ is strictly increasing with respect to $t$. This allows the following construction of a homogeneous norm \cite[Definition 1-8]{stein1978}. 
Because of this property, we call $B$ the norm-inducing matrix. 

\begin{prop}[{\cite[Proposition 1-9]{stein1978}}]
 Let $\{ A_t \}_{t > 0}$ be an expansive continuous group with generator $P$. Let $B$ be a positive definite symmetric matrix satisfying \eqref{alg}. For $x \ne 0$ we define $\tilde{\rho}(x) = t$ to be the unique $t>0$ such that $\langle B A_{t^{-1}} x, A_{t^{-1}}x \rangle  = 1$. For $x=0$ we set $\tilde \rho(x)=0$. Then, $\tilde \rho$ is a homogeneous quasi-norm with respect to $\{A_t\}$, which 
is $C^\infty$ on $\R^n \setminus \{0\}$.
\end{prop}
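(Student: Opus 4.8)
The plan is to treat the defining relation $\langle B A_{t^{-1}}x, A_{t^{-1}}x\rangle = 1$ as an implicit equation and extract all the stated properties from the monotonicity established above together with compactness. Fix $x \neq 0$ and set $g_x(s) = \langle B A_s x, A_s x\rangle$. The computation preceding the proposition already gives $g_x'(s) = \tfrac1s|A_s x|^2 > 0$, so $g_x$ is continuous and strictly increasing on $(0,\infty)$. To see that $\tilde\rho(x)$ is well defined I would check the two limits: condition (a) of Theorem~\ref{thm:expansive} gives $A_s x \to 0$ as $s\to 0^+$, hence $g_x(s)\to 0$; and condition (c), in the form \eqref{def:expansive-c0}, gives $|A_{t_0^k}x| \ge c_0^k|x|\to\infty$, so using $g_x(s) \ge \lambda_{\min}(B)|A_s x|^2$ and monotonicity, $g_x(s)\to\infty$ as $s\to\infty$. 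Thus $g_x$ is a bijection of $(0,\infty)$ onto itself and there is a unique $s^*>0$ with $g_x(s^*)=1$; setting $\tilde\rho(x)=1/s^*$ makes it well defined and positive, and $\tilde\rho(x)=0 \iff x=0$ is then immediate.

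Homogeneity is a direct substitution using the group law. Since $A_s A_r = A_{sr}$, one has $g_{A_r x}(s) = g_x(sr)$, so the unique zero of $g_{A_r x}(\cdot)-1$ occurs at $s^*/r$, giving $\tilde\rho(A_r x) = r\tilde\rho(x)$. It is also worth recording, since $A_1 = I$, that $\tilde\rho(x)=1 \iff \langle Bx,x\rangle = 1$, so the unit sphere of $\tilde\rho$ is the ellipsoid $S = \{\langle Bx,x\rangle=1\}$; moreover for $x\neq 0$ the bound $\tilde\rho(x)\le 1$ forces $\langle Bx,x\rangle = g_x(1)\le g_x(1/\tilde\rho(x)) = 1$, so the unit ball $\overline B = \{\tilde\rho\le 1\}$ is contained in the solid ellipsoid $\{\langle Bx,x\rangle\le 1\}$ and is in particular bounded.

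For smoothness I would apply the implicit function theorem to $F(x,t) = \langle B A_{t^{-1}}x, A_{t^{-1}}x\rangle - 1$, which is $C^\infty$ (indeed real-analytic) on $\R^n\times(0,\infty)$ because $t\mapsto A_{t^{-1}}=\exp(-P\ln t)$ is. A short computation using $g_x'$ gives $\partial_t F(x,t) = -\tfrac1t|A_{t^{-1}}x|^2$, which is strictly negative for $x\neq 0$; hence $F(x,\tilde\rho(x))=0$ can be solved smoothly for $t$ near each $x\neq 0$, and $\tilde\rho\in C^\infty(\R^n\setminus\{0\})$. In particular $\tilde\rho$ is continuous, and the shrinking of the sets $A_\varepsilon\overline B$ to $\{0\}$ shows that $\tilde\rho(x)\to 0$ as $x\to 0$, so continuity persists at the origin.

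The only step requiring more than formal manipulation is the quasi-triangle inequality, which I would obtain from homogeneity plus compactness. Since $\overline B$ is closed (by continuity) and bounded, it is compact, so $(u,v)\mapsto \tilde\rho(u+v)$ attains a finite maximum $c := \max_{u,v\in\overline B}\tilde\rho(u+v)$ on $\overline B\times\overline B$. Given $x,y$ not both zero, put $s=\max(\tilde\rho(x),\tilde\rho(y))$ and $u=A_{1/s}x$, $v=A_{1/s}y$; by homogeneity $u,v\in\overline B$, and applying homogeneity once more gives $\tilde\rho(x+y)=s\,\tilde\rho(u+v)\le cs = c\max(\tilde\rho(x),\tilde\rho(y))\le c(\tilde\rho(x)+\tilde\rho(y))$. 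The main thing to get right here is the compactness of $\overline B$, which is exactly what the containment $\overline B\subseteq\{\langle Bx,x\rangle\le 1\}$ from the previous step provides; everything else reduces to the scaling identity $A_s A_{1/s}=I$.
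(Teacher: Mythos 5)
Your argument is correct, and it is essentially the standard proof: the paper itself gives no proof, deferring to \cite[Proposition 1-9]{stein1978}, and your route (monotonicity of $s\mapsto\langle BA_sx,A_sx\rangle$ from \eqref{alg} for well-definedness and homogeneity, the implicit function theorem for smoothness, and compactness of the unit ball for the quasi-triangle inequality) is the same one used there. One small remark: your containment $\{\tilde\rho\le 1\}\subseteq\{\langle Bx,x\rangle\le 1\}$ is in fact an equality, since $g_x$ is strictly increasing, so the unit ball of $\tilde\rho$ is exactly the solid ellipsoid; this also gives its closedness without appealing to continuity.
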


With respect to the Fourier transform, let $\{ A_t^* \}_{t>0}$ be the adjoint of the continuous group, which is itself another continuous group with $P^*$ as its generator and with $B^*$ as its norm-inducing matrix. 
We denote $\tilde{\rho}_*$ to be the associated quasi-norm. If $A_t$ are diagonal matrices, then $\tilde{\rho}_* = \tilde{\rho}$.

With this class of continuous groups, we extend the definition of a parabolic Hardy space from \cite{MR0417687, MR0450888} verbatim, and denote such a Hardy space $H_{\{ A_t \}}^p$, emphasizing we are using the whole group $\{ A_t \}$ to define the Hardy space. Fix such a group, and with it, generator $P$, trace $\gamma = \tr(P)$, and quasi-norm $\tilde{\rho}$. Then the parabolic non-tangential maximal operator, associated with $\varphi \in \mathcal{S}$, is given by
	\[ \tilde{M}_{\varphi} f(x) = \sup_{t > 0} \ \sup_{\tilde{\rho}(x - y) < t} |f \ast \tilde{\varphi}_t (y)| \]
with the parabolic dilation $\tilde{\varphi}_t (y) = t^{-\gamma} \varphi(A_t^{-1} y)$. The presence of tilde in $\tilde M_{\varphi}$ and $\tilde{\varphi}_t$ is meant to distinguish between discrete anisotropic setting and continuous parabolic setting.

We extend the parabolic Hardy space to our setting of expansive continuous groups with no change. This definition is originally stated using non-tangential maximal operators of all apertures $a > 0$, but for brevity we will only use the aperture $a = 1$.

\begin{definition} \cite[Definition 1.1]{MR0450888}  Let $\{ A_t \}_{t > 0}$ be an expansive continuous group. If $f \in \mathcal{S}'$ and $\varphi \in \mathcal{S}$ with $\int \varphi \neq 0$, then we say $f \in H_{\{ A_t \}}^p$ if the non-tangential maximal operator $\tilde{M}_{\varphi} f \in L^p$.
In this case, we set the norm $\| f \|_{H_{\{ A_t \}}^p} = \| \tilde{M}_{\varphi} f \|_{L^p}$.
\end{definition}

The following theorem was shown in \cite[Theorem 1.2]{MR0450888} under the assumption that a continuous group of dilations $\{A_t\}_{t>0}$ satisfies \eqref{CT}. However, as we will see later it also holds for all expansive groups.

\begin{thrm} Every choice of $\varphi \in \mathcal{S}$, with $\int \varphi \neq 0$ will result in the same space $H_{\{ A_t \}}^p$.
\end{thrm}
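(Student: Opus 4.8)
The plan is to reduce to the Calder\'on--Torchinsky setting \cite[Theorem 1.2]{MR0450888} by a linear change of variables, exploiting the norm-inducing matrix $B$ of \eqref{alg} to replace the Euclidean norm by one that the flow genuinely expands. Set $T = B^{1/2}$ and conjugate the group, $\tilde A_t = T A_t T^{-1}$, with generator $\tilde P = T P T^{-1}$. A direct computation from \eqref{alg} gives $\tilde P + \tilde P^* = B^{-1}$, which is positive definite; hence, writing $\frac{d}{dt}|\tilde A_t y|^2 = \frac{1}{t}\langle(\tilde P + \tilde P^*)\tilde A_t y, \tilde A_t y\rangle$ and bounding the symmetric form above and below by its extreme eigenvalues, one obtains two-sided polynomial control $t^{a}|y| \le |\tilde A_t y| \le t^{b}|y|$ for all $t \ge 1$, with $a,b>0$. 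After the time reparametrization $t \mapsto t^{\beta}$ (which replaces $P$ by $\beta P$ and leaves condition (b) of Theorem \ref{thm:expansive} intact), choosing $\beta$ large forces both exponents to be at least $1$, so the reparametrized conjugate group satisfies \eqref{CT} exactly. This is the step that resolves the difficulty flagged before \eqref{CT}: the Euclidean ball need not be $A_t$-invariant near $t=1$, but the $B$-ellipsoid is, and conjugation turns the $B$-ellipsoid into the Euclidean ball.

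Next I would check that the parabolic Hardy space and the class of admissible test functions transform compatibly under these two operations, so that $\varphi$-independence can be transported. For the time reparametrization this is immediate: since $\tilde\varphi_{s^\beta}$ coincides with the $s$-dilation for the group $\{A_{s^\beta}\}$ and the aperture condition rescales accordingly, the two non-tangential maximal operators are literally equal, and the space is unchanged. For the conjugation, a change of variables $z \mapsto T^{-1}z$ in the convolution, together with the identity $A_t^{-1}T^{-1} = T^{-1}\tilde A_t^{-1}$ and the relation $\tilde\rho(x) = \tilde\rho^{(T)}(Tx)$ between the associated quasi-norms, yields
\[
\tilde M_{\varphi} f = |\det T|^{-1}\,\bigl(\tilde M^{(T)}_{\varphi \circ T^{-1}}(f \circ T^{-1})\bigr)\circ T,
\]
where $\tilde M^{(T)}$ denotes the maximal operator for $\{\tilde A_t\}$. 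Since composition with $T$ is a bounded bijection of $L^p$ up to the Jacobian, and since $\varphi \mapsto \varphi \circ T^{-1}$ is a bijection of $\{\varphi \in \mathcal S : \int \varphi \neq 0\}$ onto itself, membership in $H^p_{\{A_t\}}$ defined via $\varphi$ is equivalent to membership in $H^p_{\{\tilde A_t\}}$ defined via $\varphi \circ T^{-1}$, for every such $\varphi$.

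Combining the two reductions, the statement for $\{A_t\}$ follows from the statement for the reparametrized conjugate group, which satisfies \eqref{CT} and is therefore covered by \cite[Theorem 1.2]{MR0450888}: any two admissible test functions give the same space there, hence the same space here after transporting back.

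The main obstacle I anticipate is the first paragraph---verifying that conjugation by $B^{1/2}$ followed by a time rescaling really produces a group obeying \eqref{CT}. The subtle point is that mere monotonicity of $t \mapsto \langle B A_t x, A_t x\rangle$ only yields the exponent $0$ (nesting of ellipsoids), whereas \eqref{CT} demands a strictly super-linear rate $t^{c_1}$ with $c_1 \ge 1$ holding uniformly down to $t=1$. The resolution is that the differential inequality gives a genuine lower bound $\frac{d}{dt}\log|\tilde A_t y|^2 \ge \lambda_{\min}(B^{-1})/t$, producing a positive exponent valid for all $t \ge 1$ with multiplicative constant exactly $1$; the time reparametrization then amplifies this exponent past $1$. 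Once this is in place, the remaining bookkeeping in the change of variables is routine, and no part of the Fefferman--Stein machinery needs to be reproved.
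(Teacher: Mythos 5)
Your argument is correct, but it takes a genuinely different route from the paper's. The paper proves this theorem only implicitly, via Lemma \ref{dc}: for each fixed $\varphi$ with $\int \varphi \neq 0$ it sandwiches the continuous non-tangential maximal function between the discrete radial maximal function $M^0_\varphi f$ (from below, by restricting to $t=(t_0)^k$) and a discrete grand maximal function $M^0_{\mathcal F}f$ (from above, by absorbing the translation and the $t\in[1,t_0]$ dilation into a bounded family of test functions), so that $\varphi$-independence is inherited from the discrete anisotropic theory (Theorem \ref{sf}, i.e.\ \cite{MR1982689}); the theorem is thus a byproduct of the identification $H^p_A = H^p_{\{A_t\}}$. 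You instead stay entirely inside the continuous theory: conjugation by $B^{1/2}$ turns the invariant $B$-ellipsoids into Euclidean balls, the identity $\tilde P + \tilde P^* = B^{-1/2}(BP+P^*B)B^{-1/2} = B^{-1}$ gives the two-sided bounds $t^{\lambda_{\min}(B^{-1})/2}|y| \le |\tilde A_t y| \le t^{\lambda_{\max}(B^{-1})/2}|y|$ for $t\ge 1$, and the reparametrization $t\mapsto t^{\beta}$ pushes both exponents past $1$, landing exactly in the class \eqref{CT} where \cite[Theorem 1.2]{MR0450888} applies; I checked the algebra and the transport formula $\tilde M_{\varphi} f = |\det T|^{-1}\bigl(\tilde M^{(T)}_{\varphi \circ T^{-1}}(f\circ T^{-1})\bigr)\circ T$, and both are right. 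Your route buys a clean structural fact the paper never states: every expansive group is, up to linear conjugation and time rescaling, already a Calder\'on--Torchinsky group, so the extra generality of expansive groups is a matter of normalization; the paper's route buys the stronger identification with $H^p_A$, which is its real goal. Two small points you should still nail down, neither a gap: the transported quasi-norm $\tilde\rho^{(T)}$ and the reparametrized $\tilde\rho^{1/\beta}$ need not be the canonical quasi-norms of the new groups, so to quote \cite{MR0450888} you must invoke the mutual equivalence of homogeneous quasi-norms and the aperture-independence of the non-tangential maximal function under \eqref{CT}; and $f\circ T^{-1}$ must be read as the pushforward of a tempered distribution so that the convolution identity holds distributionally.
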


\section{Connection Between Anisotropic and Parabolic Setting}\label{S3}
Having established the anisotropic and parabolic settings, the following result establishes their close relationship. Theorem \ref{thm:Construct-A_t} is inspired by the work of Cheshmavar and F\"uhr \cite{chesh-fuhr} on classification of anisotropic Besov spaces.

\begin{thrm}\label{thm:Construct-A_t} Suppose $A$ is an anisotropic dilation matrix. That is, all eigenvalues $\lambda$ of $A$ satisfy $|\lambda|>1$. Then there exists a unique continuous group of dilations $\{A_t= \exp(P\ln t)\}_{t>0}$, such that:
	\begin{enumerate}
	\item Its generator $P$ has all positive eigenvalues and $\tr(P)=1$, and
	\item $A$ is equivalent to $A_t$ for all $t>1$.
\end{enumerate}
More precisely, if $\{A'_t = \exp(P' \ln t)\}_{t>0}$ is another one-parameter group of dilations with a generator $P'$ satisfying (i) and $A$ is equivalent to $A'_t$ for some $t>1$, then $P=P'$. In this case, we say $\{ A_t \}_{t>0}$ is the continuous group associated with the anisotropic dilation $A$. 
\end{thrm}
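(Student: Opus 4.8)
The plan is to build $\{A_t\}$ explicitly from the spectral data of $A$ and to recover uniqueness from an equivalence of the two group quasi-norms. For existence, set $b=|\det A|$ and first produce an equivalent dilation $\hat A$ with positive spectrum. Passing to the real Jordan form of $A$, I would strip the rotational and sign parts: replace each negative eigenvalue $\lambda$ by $|\lambda|$ and each conjugate pair $\sigma e^{\pm i\phi}$ by $\sigma$, leaving the nilpotent (Jordan) structure intact. The resulting $\hat A$ has all eigenvalues real and larger than $1$, with $\det\hat A=\prod|\lambda_i|=b$. Since $A$ and $\hat A$ share the same Jordan structure and differ only by bounded orthogonal factors, the products $A^k\hat A^{-k}$ and $\hat A^kA^{-k}$ stay bounded, which forces $\rho_A$ and $\rho_{\hat A}$ to be equivalent; hence $A$ is equivalent to $\hat A$ (Definition \ref{d21}). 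As $\hat A$ has positive spectrum it has a real logarithm, and I set $P:=(\ln b)^{-1}\log\hat A$. Then $P$ has real positive spectrum $\{(\ln b)^{-1}\ln|\lambda_i|\}$, and $\tr P=(\ln b)^{-1}\ln\det\hat A=1$, so $P$ satisfies (i); by Theorem \ref{thm:expansive} the group $A_t=\exp(P\ln t)$ is expansive, and $A_b=\hat A$.

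Next I would upgrade the single equivalence $A\sim\hat A=A_b$ to condition (ii), and this is exactly where $\tr P=1$ is used. Let $\tilde\rho$ be the homogeneous quasi-norm associated with $\{A_t\}$. For each fixed $t>1$ the matrix $A_t$ has eigenvalues $t^{\mu_i}>1$, so it is a genuine anisotropic dilation, and $|\det A_t|=\exp(\tr P\cdot\ln t)=t$. Since $\tilde\rho(A_tx)=t\,\tilde\rho(x)=|\det A_t|\,\tilde\rho(x)$ and $\tilde\rho$ satisfies the quasi-triangle inequality, $\tilde\rho$ is an admissible quasi-norm for the dilation $A_t$; by uniqueness of the anisotropic quasi-norm up to equivalence \cite[Lemma 2.4]{MR1982689}, $\rho_{A_t}$ is equivalent to $\tilde\rho$ for every $t>1$. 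Chaining $\rho_A\sim\rho_{\hat A}=\rho_{A_b}\sim\tilde\rho\sim\rho_{A_t}$ gives (ii).

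For uniqueness, suppose $P'$ satisfies (i) and $A$ is equivalent to $A'_s$ for some $s>1$. The promotion argument of the previous paragraph applies verbatim to $\{A'_t\}$ (it uses only $\tr P'=1$), so $\rho_{A'_t}\sim\tilde\rho'$ for all $t>1$; hence $\tilde\rho'\sim\rho_{A'_s}\sim\rho_A\sim\tilde\rho$, i.e. the two group quasi-norms are equivalent. Using the homogeneities, for every $z$ one computes $\tilde\rho(A_tA'^{-1}_tz)\le C\tilde\rho(z)$ uniformly in $t>0$, and since the $\tilde\rho$-unit ball is compact with nonempty interior this yields $\sup_{t>0}\|A_tA'^{-1}_t\|<\infty$, and symmetrically $\sup_{t>0}\|A'_tA_t^{-1}\|<\infty$. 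Writing $s=\ln t$, this says the matrix family $Q(s):=e^{sP}e^{-sP'}$ and its inverse are bounded on all of $\R$.

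The final step, which I expect to be the main obstacle, is the linear-algebra claim that boundedness of $Q$ and $Q^{-1}$ on $\R$, for generators of real positive spectrum, forces $P=P'$. I would argue in two stages. First, boundedness of $Q,Q^{-1}$ gives $\|e^{sP}v\|\asymp\|e^{sP'}v\|$ for every $v$ and every $s\in\R$; taking $s^{-1}\ln(\cdot)$ as $s\to\pm\infty$ shows the two growth-rate flags of $\R^n$ coincide, and intersecting the $s\to+\infty$ and $s\to-\infty$ flags identifies the generalized eigenspaces and eigenvalues of $P$ with those of $P'$. Second, in a basis adapted to these common eigenspaces both $P$ and $P'$ are block-diagonal with blocks $\mu_iI+N_i$ and $\mu_iI+N_i'$, so $Q$ is block-diagonal with entries $e^{sN_i}e^{-sN_i'}$, which are polynomial in $s$; a bounded matrix-valued polynomial is constant, equal to its value $I$ at $s=0$, whence $N_i=N_i'$ and $P=P'$. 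I would stress that the real-spectrum hypothesis is essential at this last step: dropping it would allow an orthogonal factor $e^{sS}$ with $S$ skew to be absorbed into $Q$ without affecting $\tilde\rho$, and uniqueness would genuinely fail.
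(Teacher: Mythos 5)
Your overall architecture for existence (pass to an equivalent dilation with positive spectrum, take a real logarithm, normalize the trace) matches the paper's, but there is a concrete gap at the point where you build $\hat A$. When a complex conjugate pair $\lambda=|\lambda|\omega$, $|\omega|=1$, carries a Jordan block of size $k\ge 2$, replacing the diagonal blocks $M_\lambda$ by $|\lambda|\mathbf I_2$ while ``leaving the nilpotent (Jordan) structure intact'' (i.e.\ keeping $\mathbf I_2$ on the superdiagonal) produces a matrix that is \emph{not} equivalent to the original block, and the products $A^m\hat A^{-m}$ are \emph{not} bounded. For the $4\times4$ real Jordan block $J=\left[\begin{smallmatrix} M_\lambda & \mathbf I_2\\ 0& M_\lambda\end{smallmatrix}\right]$ and your $\hat J = \left[\begin{smallmatrix} |\lambda|\mathbf I_2 & \mathbf I_2\\ 0 & |\lambda|\mathbf I_2\end{smallmatrix}\right]$, the upper-right $2\times 2$ block of $J^m\hat J^{-m}$ equals $m|\lambda|^{-1}M_{\omega^m}(M_{\ov\omega}-\mathbf I_2)$, whose norm is $m|\lambda|^{-1}|\omega-1|$ and grows linearly in $m$ whenever $\omega\ne 1$; by Lemma \ref{eq} (with $\epsilon=1$) the two blocks are inequivalent. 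Equivalence of dilations is not similarity-invariant --- that is the whole point of Section \ref{S4} --- so ``same Jordan structure plus bounded orthogonal factors'' is not enough: the rotational factor must \emph{commute} with the positive one. The correct surrogate is the $D_2$ of Lemma \ref{l6.7}, which carries $M_{\ov\omega}$ rather than $\mathbf I_2$ on the superdiagonal (equivalently, the positive part of the multiplicative Jordan--Chevalley decomposition), so that $J^mD_2^{-m}=D_1^m$ is an isometry. Once this is repaired, your promotion of the single equivalence $A\sim A_b$ to $A\sim A_t$ for all $t>1$ --- via the group quasi-norm $\tilde\rho$, the identity $\tilde\rho(A_tx)=|\det A_t|\tilde\rho(x)$ forced by $\tr P=1$, and the uniqueness of anisotropic quasi-norms up to equivalence --- is a clean substitute for the paper's use of Lemma \ref{cf0}.

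Your uniqueness argument takes a genuinely different route from the paper's and appears correct. The paper fixes a scale $t_0$ with $\det A_{t_0}=\det A'_{t_0}$ and invokes the Cheshmavar--F\"uhr rigidity result (Lemma \ref{cf2}: equivalent expansive matrices with positive spectrum and equal determinant must be equal). You instead deduce $\sup_{s\in\R}\|e^{sP}e^{-sP'}\|<\infty$ from the equivalence of the two group quasi-norms and prove the rigidity by hand: matching growth rates as $s\to\pm\infty$ identifies the generalized eigenspaces and eigenvalues, and on each common block $e^{sN}e^{-sN'}$ is a bounded matrix-valued polynomial, hence constantly $\mathbf I$, forcing $N=N'$. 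This amounts to a self-contained proof of the one-parameter case of Lemma \ref{cf2}, which buys independence from the cited classification at the cost of carrying out the flag-intersection argument carefully; the paper's version is shorter but outsources exactly this step.
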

\begin{remark} Due to our choice of the generator $P$, we have $\tr(P) = 1$, so $\det A_t = t$, and the dilation of a function $f$, with respect to the continuous group, takes the form $\tilde{f}_t (x) = t^{-1} f(A_t^{-1} x)$. For the rest of the paper, we will always take our generator $P$ to have trace 1.
\end{remark}

In the proof of Theorem \ref{thm:Construct-A_t} we need to use the following three lemmas from \cite{MR1982689} and \cite{chesh-fuhr} about equivalence of expansive dilations, see Definition \ref{d21}.

\begin{lem}[{\cite[Lemma 10.2]{MR1982689}}] \label{eq}
Let $A$ and $B$ be two expansive dilations. Then, $A$ and $B$ are equivalent if and only if 
\[
\sup_{k\in \Z} ||A^k B^{- \lfloor \epsilon k \rfloor}|| <\infty,
\qquad\text{where }\epsilon = \frac{\ln |\det A|}{\ln |\det B|}.
\]
\end{lem}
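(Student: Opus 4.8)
The plan is to work with the step quasi-norms $\rho_A,\rho_B$ and their ``level functions'' $\phi_A,\phi_B\colon\R^n\setminus\{0\}\to\Z$ defined by $\rho_A(x)=b_A^{\phi_A(x)}$, where $b_A=|\det A|$, so that $\phi_A(Ax)=\phi_A(x)+1$ and $\{\phi_A\le k\}=B_{k+1}$ (and likewise for $B$, with $b_B=|\det B|$). Recall that equivalence means simply that $\rho_A/\rho_B$ is bounded above and below. The one geometric fact I would record at the outset is that, since the canonical ellipsoids shrink to $\{0\}$ and exhaust $\R^n$, on any fixed Euclidean annulus $\{c_1\le|y|\le c_2\}$ one has $\rho_A(y)\asymp1$, and conversely $\rho_A(y)\asymp1$ forces $|y|\asymp1$ (the same for $B$). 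I would also record the scaling identity $b_A^{1/\epsilon}=b_B$ (so that $|k-j/\epsilon|\le\tfrac12$ forces $b_A^k\asymp b_B^j$) and the determinant computation $\det(A^kB^{-m})=b_A^kb_B^{-m}=b_B^{\{\epsilon k\}}\in[1,b_B)$ for $m=\lfloor\epsilon k\rfloor$, where $\{\cdot\}$ denotes the fractional part.

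For the forward implication, assume $\rho_A\asymp\rho_B$, fix $k$, set $m=\lfloor\epsilon k\rfloor$, and take a unit vector $x$. Then $\rho_B(x)\asymp1$, so $\rho_B(B^{-m}x)=b_B^{-m}\rho_B(x)\asymp b_B^{-m}$; the equivalence transfers this to $\rho_A(B^{-m}x)\asymp b_B^{-m}$, and $A$-homogeneity gives $\rho_A(A^kB^{-m}x)=b_A^k\rho_A(B^{-m}x)\asymp b_A^kb_B^{-m}=b_B^{\{\epsilon k\}}\asymp1$. By the recorded geometric fact this yields $|A^kB^{-m}x|\asymp1$ uniformly in $x$ and $k$, whence $\sup_k\|A^kB^{-m}\|<\infty$.

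For the converse I would run the argument in reverse. Given $x\ne0$, set $j=\phi_B(x)$ and $u=B^{-j}x$, so $|u|\asymp1$, and choose $k$ to be the nearest integer to $j/\epsilon$, so that $|k-j/\epsilon|\le\tfrac12$ and $|m-j|\le\tfrac{\epsilon}{2}+1$ is bounded (with $m=\lfloor\epsilon k\rfloor$). The goal is to show $|A^{-k}x|\asymp1$; then the geometric fact gives $\rho_A(A^{-k}x)\asymp1$, so $\phi_A(x)=k+O(1)$ and $\rho_A(x)\asymp b_A^k\asymp b_B^j=\rho_B(x)$, delivering the two-sided equivalence in one stroke. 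Writing $A^{-k}x=(A^{-k}B^{m})(B^{j-m}u)$ and using $|B^{j-m}u|\asymp1$, the upper bound $|A^{-k}x|\le C$ needs $\|A^{-k}B^{m}\|$ bounded, and the matching lower bound needs $\|B^{-m}A^{k}\|=\|(A^{-k}B^{m})^{-1}\|$ bounded.

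The hard part, and the only step beyond bookkeeping, is exactly this two-sided matrix control: the hypothesis bounds a single ordering, $A^kB^{-m}$, whereas the argument needs products in several orders and, crucially, the norms of inverses. I would resolve this in two moves. First, replacing $k$ by $-k$ in the hypothesis and absorbing the discrepancy between $\lceil\epsilon k\rceil$ and $\lfloor\epsilon k\rfloor$ into one harmless factor $B^{\pm1}$ shows $\sup_k\|A^{-k}B^{m}\|<\infty$. Second, to pass to inverses I would invoke the adjugate estimate $\|T^{-1}\|\le C_n\|T\|^{\,n-1}/|\det T|$ together with the determinant computation above, which keeps $|\det(A^{-k}B^{m})|=b_B^{-\{\epsilon k\}}$ bounded away from $0$; this converts the norm bound on $A^{-k}B^{m}$ into one on $B^{-m}A^{k}$. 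With both bounds in hand, $|A^{-k}x|\asymp1$ follows and the converse is complete. I expect the determinant-plus-adjugate device to be the crux; the floor/ceiling reconciliations and the choice $k\approx j/\epsilon$ are routine once the scaling identity $b_A^{1/\epsilon}=b_B$ is in place.
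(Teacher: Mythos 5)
The paper does not prove this lemma; it is imported verbatim from \cite[Lemma 10.2]{MR1982689}, so there is no in-paper proof to compare against. Your argument is correct and is essentially the standard one from that reference: the forward direction via homogeneity of the step quasi-norms ($b_A^k b_B^{-\lfloor \epsilon k\rfloor}=b_B^{\{\epsilon k\}}\in[1,b_B)$) together with the fact that $\rho_A(y)\asymp 1$ iff $|y|\asymp 1$, and the converse handled by choosing $k$ near $\phi_B(x)/\epsilon$. Your identification of the crux is also right: the hypothesis only bounds one ordering of the product, and the passage to $\|B^{-m}A^k\|=\|(A^{-k}B^m)^{-1}\|$ via the adjugate bound $\|T^{-1}\|\le C_n\|T\|^{n-1}/|\det T|$, with $|\det(A^{-k}B^m)|=b_B^{-\{\epsilon k\}}$ bounded away from zero, is exactly the device that makes the converse work.
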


\begin{lem}[{\cite[Lemma 7.6]{chesh-fuhr}}] \label{cf0}
Let $A$ and $B$ be expansive matrices of the form $A=\exp(t X)$ and $B=\exp( s X)$ for some matrix $X$ and $s,t>0$. Then, $A$ and $B$ are equivalent.
\end{lem}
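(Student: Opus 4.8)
The plan is to invoke Lemma~\ref{eq}, which reduces the equivalence of the two expansive dilations $A$ and $B$ to the single uniform bound $\sup_{k\in\Z}\|A^k B^{-\lfloor \epsilon k\rfloor}\|<\infty$, where $\epsilon=\ln|\det A|/\ln|\det B|$. The decisive structural fact is that $A$ and $B$ are both exponentials of the same (real) matrix $X$, so they commute and all of their integer powers are again exponentials of scalar multiples of $X$: $A^k=\exp(ktX)$ and $B^{-m}=\exp(-msX)$. This collapses the operator whose norm must be controlled into a single exponential $\exp(c_k X)$ with a scalar exponent $c_k$ that I can compute explicitly.

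First I would pin down $\epsilon$. Since $\det\exp(uX)=\exp(u\tr X)$, we have $\ln|\det A|=t\tr X$ and $\ln|\det B|=s\tr X$, hence $\epsilon=t/s$ as soon as $\tr X\neq 0$. That $\tr X>0$ is exactly where expansiveness enters: the eigenvalues of $A=\exp(tX)$ are $\exp(t\mu)$ as $\mu$ runs over the eigenvalues of $X$, and $|\exp(t\mu)|=\exp(t\operatorname{Re}\mu)>1$ forces $\operatorname{Re}\mu>0$ for every such $\mu$, so $\tr X=\sum_i \mu_i>0$. With $\epsilon=t/s$ in hand, commutativity yields
\[
A^k B^{-\lfloor \epsilon k\rfloor}=\exp\!\big((kt-s\lfloor (t/s)k\rfloor)X\big)=\exp(c_k X),\qquad c_k:=kt-s\lfloor (t/s)k\rfloor.
\]
Writing $(t/s)k=\lfloor (t/s)k\rfloor+\{(t/s)k\}$ with fractional part $\{(t/s)k\}\in[0,1)$, the integer-part term cancels and $c_k=s\{(t/s)k\}\in[0,s)$ for every $k\in\Z$.

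The proof then finishes by a compactness observation: all the scalars $c_k$ lie in the bounded interval $[0,s]$, and $c\mapsto\exp(cX)$ is continuous, so $\sup_{k\in\Z}\|A^kB^{-\lfloor\epsilon k\rfloor}\|\le\max_{c\in[0,s]}\|\exp(cX)\|<\infty$, and Lemma~\ref{eq} delivers the equivalence. The only step requiring genuine care will be the sign of $\tr X$, which is needed both to make $\epsilon$ well defined and to guarantee the exact cancellation of the integer part; this is precisely where the expansiveness hypothesis is used. As an alternative that bypasses Lemma~\ref{eq}, one can normalize $P:=X/\tr X$, so that $A=A_{r_1}$ and $B=A_{r_2}$ for the continuous group $\{A_r=\exp(P\ln r)\}$ with $r_1=\exp(t\tr X)$, $r_2=\exp(s\tr X)>1$; its homogeneous quasi-norm $\tilde\rho$ then satisfies $\tilde\rho(Ax)=|\det A|\,\tilde\rho(x)$ and $\tilde\rho(Bx)=|\det B|\,\tilde\rho(x)$, so $\tilde\rho$ is simultaneously an admissible quasi-norm for the discrete dilations $A$ and $B$, and uniqueness of the quasi-norm structure (\cite[Lemma 2.4]{MR1982689}) forces $\rho_A\sim\tilde\rho\sim\rho_B$.
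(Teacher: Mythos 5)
The paper does not prove Lemma~\ref{cf0}; it is quoted verbatim from Cheshmavar--F\"uhr \cite[Lemma 7.6]{chesh-fuhr}, so there is no in-paper proof to compare against. Your argument is correct and self-contained, and it runs exactly through the reduction the paper makes available, namely Lemma~\ref{eq}. The three ingredients all check out: (1) $\det\exp(uX)=e^{u\tr X}>0$, so $\ln|\det A|=t\tr X$, $\ln|\det B|=s\tr X$, and $\epsilon=t/s$ once $\tr X\neq 0$; (2) expansiveness of $A=\exp(tX)$ forces every eigenvalue $\mu$ of $X$ to satisfy $\operatorname{Re}\mu>0$ (since $|e^{t\mu}|=e^{t\operatorname{Re}\mu}>1$ and $t>0$), and as $X$ is real, $\tr X=\sum_i\operatorname{Re}\mu_i>0$; (3) commutativity gives $A^kB^{-\lfloor\epsilon k\rfloor}=\exp(c_kX)$ with $c_k=s\{(t/s)k\}\in[0,s)$ for every $k\in\Z$, so the supremum in Lemma~\ref{eq} is dominated by $\max_{c\in[0,s]}\|\exp(cX)\|<\infty$ by continuity on a compact interval. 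The one hypothesis you use implicitly is that $X$ is real, which is the setting in which the lemma is applied in the proof of Theorem~\ref{thm:Construct-A_t} (there $X$ comes from a real logarithm of a positive-eigenvalue matrix), so this is harmless. Your alternative route --- normalizing $P=X/\tr X$, observing that $A=A_{r_1}$ and $B=A_{r_2}$ for the expansive one-parameter group $\{A_r=\exp(P\ln r)\}_{r>0}$, and noting that its homogeneous quasi-norm $\tilde\rho$ is simultaneously an admissible quasi-norm for the discrete dilations $A$ and $B$ (with $\tilde\rho(Ax)=|\det A|\,\tilde\rho(x)$, etc.), whence \cite[Lemma 2.4]{MR1982689} gives $\rho_A\sim\tilde\rho\sim\rho_B$ --- is also valid and arguably more conceptual, since it explains the equivalence as both matrices living on a single continuous dilation scale; it does require knowing that the quasi-norm construction of Proposition 2.10 applies, i.e., that $P$ has eigenvalues with positive real part, which follows from your step (2).
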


\begin{lem}[{\cite[Theorem 7.9(a)]{chesh-fuhr}}] \label{cf2}
Let $A$ and $B$ be expansive matrices having only positive eigenvalues and satisfying $\det A = \det B$. Then, $A$ and $B$ are equivalent if and only if $A=B$.
\end{lem}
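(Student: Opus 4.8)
\emph{Proof proposal.} The plan is to reduce the statement to the combinatorial criterion of Lemma~\ref{eq} and then to an asymptotic analysis of one-parameter groups. Since $A$ and $B$ are expansive with positive eigenvalues, all their eigenvalues exceed $1$, so $\det A,\det B>1$, and the hypothesis $\det A=\det B$ forces the exponent $\epsilon=\ln|\det A|/\ln|\det B|$ in Lemma~\ref{eq} to equal $1$. Thus, by Definition~\ref{d21} and Lemma~\ref{eq}, $A$ and $B$ are equivalent if and only if $\sup_{k\in\Z}\|A^kB^{-k}\|<\infty$. The direction $A=B\Rightarrow$ equivalent is immediate, since then $A^kB^{-k}=\mathbf I$. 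For the converse I would first use that equivalence is symmetric, so the same criterion with the roles of $A$ and $B$ reversed gives $\sup_{k\in\Z}\|B^kA^{-k}\|<\infty$; as $(A^kB^{-k})^{-1}=B^kA^{-k}$, the family $\{A^kB^{-k}\}_{k\in\Z}$ is bounded in $GL_n(\R)$ with uniformly bounded inverses.

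Next I would pass to the infinitesimal picture. Because every eigenvalue of $A$ and of $B$ is a positive real number, the real principal logarithms $X=\log A$ and $Y=\log B$ exist and have real eigenvalues, with $A^k=e^{kX}$ and $B^k=e^{kY}$. Writing $s=k+\delta$ with $k=\lfloor s\rfloor$ and $\delta\in[0,1)$, the factorization $e^{sX}e^{-sY}=e^{\delta X}\,(A^kB^{-k})\,e^{-\delta Y}$ together with the compactness of $\{e^{\delta X},e^{-\delta Y}:\delta\in[0,1]\}$ upgrades the discrete bound to a continuous one: the smooth curve $G(s):=e^{sX}e^{-sY}$, with $G(0)=\mathbf I$, satisfies $\sup_{s\in\R}\bigl(\|G(s)\|+\|G(s)^{-1}\|\bigr)<\infty$.

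The heart of the argument is to convert this boundedness into the commutation relation $XY=YX$. For this I would use the cocycle identity $G(s)^{-1}G(s+t)=e^{sY}G(t)e^{-sY}$, whose left-hand side is bounded uniformly in $s$ for each fixed $t$; hence for every $t$ the conjugation orbit $s\mapsto e^{sY}G(t)e^{-sY}$ is bounded on all of $\R$. The key sub-lemma is that, for a matrix $Y$ with real eigenvalues, $\sup_{s\in\R}\|e^{sY}Me^{-sY}\|<\infty$ holds if and only if $MY=YM$: decomposing $\R^n$ into the generalized eigenspaces $E_\beta$ of $Y$, the $(\beta',\beta)$-block of $e^{sY}Me^{-sY}$ equals $e^{s(\beta'-\beta)}$ times a matrix-valued polynomial in $s$ coming from the nilpotent parts, so the off-diagonal blocks ($\beta'\neq\beta$) must vanish and each diagonal block must commute with the nilpotent part of $Y$, which is precisely $MY=YM$. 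Applying this with $M=G(t)$ shows that $G(t)$ commutes with $Y$ for every $t$; since $e^{-tY}$ commutes with $Y$, so does $e^{tX}=G(t)e^{tY}$, and differentiating at $t=0$ yields $[X,Y]=0$.

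Once $X$ and $Y$ commute the problem collapses. Then $G(t)=e^{t(X-Y)}$, and since $X$ and $Y$ are commuting matrices with real eigenvalues they are simultaneously triangularizable, so $Z:=X-Y$ has real eigenvalues. Boundedness of $e^{tZ}$ on all of $\R$ then forces $Z$ to be semisimple with purely imaginary spectrum, which together with its real spectrum gives $Z=0$; hence $X=Y$ and $A=B$. I expect the \textbf{main obstacle} to be the sub-lemma in the presence of genuine Jordan structure: ruling out nontrivial nilpotent coupling requires the block bookkeeping above, and it is essential that boundedness hold as $s\to+\infty$ and $s\to-\infty$ simultaneously, which is exactly what the two-sided discrete bound, obtained from the symmetry of equivalence, provides.
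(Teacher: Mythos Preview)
The paper does not prove Lemma~\ref{cf2}; it is quoted without argument from \cite[Theorem~7.9(a)]{chesh-fuhr}, so there is no in-paper proof to compare against. Your proposal is, however, a correct and self-contained proof. The reduction via Lemma~\ref{eq} to the two-sided bound $\sup_{k\in\Z}\|A^kB^{-k}\|<\infty$ with $\epsilon=1$ is exactly right, and the passage to real logarithms $X=\log A$, $Y=\log B$ with real spectrum is legitimate (the paper itself invokes the existence of such a real $X$ in the proof of Theorem~\ref{thm:Construct-A_t}). The cocycle identity $G(s)^{-1}G(s+t)=e^{sY}G(t)e^{-sY}$ combined with your sub-lemma---that a conjugation orbit $s\mapsto e^{sY}Me^{-sY}$ bounded on all of $\R$ forces $[M,Y]=0$ when $Y$ has real spectrum---is the crux, and your block analysis (exponential factors $e^{s(\beta'-\beta)}$ killing off-diagonal blocks, bounded polynomials forcing commutation with the nilpotent part on diagonal blocks) is precisely the needed bookkeeping. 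Once $[X,Y]=0$ the endgame is clean: $G(t)=e^{t(X-Y)}$ is a bounded one-parameter group, hence $Z=X-Y$ is diagonalizable with purely imaginary spectrum, while simultaneous triangularizability of the commuting real-spectrum matrices $X,Y$ shows the spectrum of $Z$ is real, so $Z=0$. The only point worth flagging is that you correctly rely on the \emph{two-sided} bound (both $s\to\pm\infty$), which is exactly what the symmetry of equivalence provides; a one-sided bound would not suffice.
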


In addition, we will need the following strengthening of a lemma due to Cheshmavar and F\"uhr \cite[Lemma 7.7]{chesh-fuhr}.

\begin{lem}\label{l6.7}
Let $A$ be an expansive matrix. Then, there exists an expansive matrix $B$ such that
\begin{enumerate}
\item $A$ is equivalent to $B$,
\item $B$ has all positive eigenvalues,
\item $\det B = |\det A|$, and
\item
for all $r>1$ and $m=1,2,\ldots$ we have
\begin{equation}\label{dim}
\sum_{|\la|=r} \dim \ker(A - \la \mathbf I)^m =
\dim \ker(B - r \mathbf I)^m.
\end{equation}
\end{enumerate}
More precisely, there is a one-to-one correspondence between blocks in a complex Jordan normal form of $A$ and blocks in a Jordan normal form of $B$ such that each Jordan block of $A$ for an eigenvalue $\lambda \in \C$ corresponds to a Jordan block of $B$ of the same size for an eigenvalue $|\lambda|$.
\end{lem}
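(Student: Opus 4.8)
The plan is to prove Lemma~\ref{l6.7} by reducing to the Jordan structure of $A$ and constructing $B$ block-by-block. The key observation is that equivalence of dilations, by Lemma~\ref{eq}, is controlled by the boundedness of $\sup_{k}\|A^k B^{-\lfloor \epsilon k\rfloor}\|$, and since we will arrange $\det B = |\det A|$, the exponent $\epsilon = \ln|\det A|/\ln|\det B| = 1$. So the target is to produce an expansive $B$ with only positive eigenvalues such that $\sup_{k \in \Z}\|A^k B^{-k}\| < \infty$, together with the dimension-count \eqref{dim} (equivalently, the Jordan-block correspondence sending a block for $\lambda$ to a block of the same size for $|\lambda|$).

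First I would pass to a complex Jordan normal form of $A$ and treat each Jordan block separately, since the norm estimate is submultiplicative across a block-diagonal decomposition and the eigenvalue/size bookkeeping in \eqref{dim} is blockwise. For a single Jordan block $J$ with eigenvalue $\lambda = r e^{i\theta}$ of size $m$, I would define the corresponding block of $B$ to be the real Jordan block with eigenvalue $r = |\lambda|$ of the same size $m$. This choice immediately guarantees (ii), (iii) (since $|\det J| = r^m = \det(\text{block of }B)$), and the Jordan-correspondence in (iv). Expansiveness of $B$ holds because each $r = |\lambda| > 1$.

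The substance is verifying (i), i.e. $\sup_k \|J^k B_{\text{block}}^{-k}\| < \infty$ for each block. Here I would compute both $J^k$ and $B_{\text{block}}^{-k}$ explicitly. A Jordan block of size $m$ with eigenvalue $\mu$ raised to the $k$-th power has entries that are $\binom{k}{j}\mu^{k-j}$ for $j = 0, \ldots, m-1$ on the super-diagonals; thus $\|J^k\|$ grows like $|\lambda|^k k^{m-1}$ and $\|B_{\text{block}}^{-k}\|$ like $r^{-k} k^{m-1}$. Multiplying, the exponential factors $|\lambda|^k r^{-k} = 1$ cancel exactly because $|\lambda| = r$, leaving polynomial factors of order $k^{2(m-1)}$. \emph{This polynomial blow-up is the main obstacle}: a naive product of the two triangular matrices is unbounded, so the cancellation must be genuine and not merely at the level of the dominant exponential. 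To secure boundedness I would conjugate more carefully. The clean way is to write $J = r\,U(I + N/r)$ where $U$ is the rotation/unitary part carrying $e^{i\theta}$ and $N$ is the nilpotent super-diagonal, and similarly $B_{\text{block}} = r(I + N/r)$; then $J^k B_{\text{block}}^{-k}$ reduces to $U^k (I+N/r)^k (I+N/r)^{-k}$ up to handling that $U$ and $N$ may not commute. To make the nilpotent parts cancel I would instead introduce a diagonal scaling $D_k = \mathrm{diag}(1, k, k^2, \ldots, k^{m-1})$ or its continuous analogue and show $D_k^{-1} J^k B_{\text{block}}^{-k} D_k$ is bounded, or equivalently conjugate $A$ and $B$ by a fixed similarity that makes the off-diagonal entries uniformly small; Cheshmavar--F\"uhr's \cite[Lemma 7.7]{chesh-fuhr} already handles the diagonalizable case and the real-eigenvalue case, so the new content is precisely controlling the interaction between the rotational factor $U^k$ (which is bounded, $\|U^k\| = 1$) and the polynomially-growing nilpotent tails.

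Finally I would assemble the blocks: define $B$ to be the direct sum (in the appropriate real Jordan coordinates) of the blocks constructed above, conjugated back by the same similarity that put $A$ in Jordan form. Boundedness of $\sup_k \|A^k B^{-k}\|$ then follows from the blockwise bounds and the fixed conjugation, so Lemma~\ref{eq} gives equivalence (i); properties (ii)--(iv) are inherited blockwise by construction. The strengthening over \cite[Lemma 7.7]{chesh-fuhr} is the explicit Jordan-block correspondence and the dimension identity \eqref{dim}, both of which are transparent from this block-by-block construction since we never merge or split blocks.
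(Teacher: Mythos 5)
You have correctly located the crux --- for a complex eigenvalue $\lambda=re^{i\theta}$ with a nontrivial nilpotent part, the naive product $J^kB_{\mathrm{block}}^{-k}$ picks up polynomially growing off-diagonal terms --- but none of the three fixes you sketch closes this gap, and your choice of $B_{\mathrm{block}}$ is provably wrong. Concretely, for a $2\times 2$ block $J=\lambda\mathbf I+N$ with $|\lambda|=r$ and $B_{\mathrm{block}}=r\mathbf I+N$ the standard Jordan block, the $(1,2)$ entry of $J^kB_{\mathrm{block}}^{-k}$ equals $k\lambda^{k-1}r^{-k-1}(r-\lambda)$, which grows linearly in $k$ whenever $\lambda\ne r$; this already fails for a negative real eigenvalue. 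Conjugating by a $k$-dependent scaling $D_k$ does not bound the operator norm that Lemma \ref{eq} actually requires, and a fixed similarity applied simultaneously to both matrices only multiplies this entry by a constant. In fact no repair is possible while keeping the literal Jordan normal form as the block of $B$: your candidate has all positive eigenvalues and the same determinant as the correct block constructed in the paper, so if it were equivalent to $J$ it would, by transitivity and Lemma \ref{cf2}, have to \emph{equal} that correct block, which it does not.

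The missing idea is to twist the nilpotent part rather than keep it. Writing $\lambda=|\lambda|\omega$ with $|\omega|=1$, the real $2k\times 2k$ Jordan block \eqref{jon} factors as $J=D_1D_2$, where $D_1$ is the block-diagonal isometry built from $M_\omega$ and $D_2$ has $M_{|\lambda|}$ on the diagonal and $M_{\overline\omega}$ --- not $\mathbf I_2$ --- on the superdiagonal; the two factors commute, so $J^mD_2^{-m}=D_1^m$ has norm exactly $1$ for every $m\in\Z$ and Lemma \ref{eq} gives equivalence with no estimates at all. In complex coordinates this is just the identity $\lambda\mathbf I+N=e^{i\theta}\bigl(r\mathbf I+e^{-i\theta}N\bigr)$, where the scalar $e^{i\theta}$ commutes with everything; your decomposition $J=rU(I+N/r)$ is the same idea, but you then discard the twist by taking $B_{\mathrm{block}}=r(I+N/r)$. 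With the twisted block one must still verify that $D_2$, which is not in Jordan form, has Jordan structure consisting of two blocks of size $k$ for the eigenvalue $|\lambda|$ (the paper shows $T=D_2-|\lambda|\mathbf I_{2k}$ satisfies $T^k=0$ and $T^{k-1}$ has rank $2$), which is what yields (iv). For a negative real eigenvalue the same principle gives $B_{\mathrm{block}}=-J$, again not the standard Jordan block for $|\lambda|$.
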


\begin{proof}
There exists an invertible matrix $S \in GL(n,\R)$ such that $S^{-1}AS$ has real Jordan normal form. That is, $S^{-1}AS$ is a block diagonal matrix consisting of Jordan blocks corresponding to either real or complex conjugate eigenvalues of $A$. We shall define the matrix $B$ such that $S^{-1}BS$ is a block diagonal matrix where each Jordan block of $S^{-1}AS$ is replaced by a certain matrix as follows.

If $\lambda \in \C \setminus \R $ is a complex eigenvalue of $A$, then the corresponding real Jordan block is $2k \times 2k$ matrix of the form
\begin{equation}\label{jon}
J=\begin{bmatrix} M_\lambda & \mathbf I_2 &  & & \\
&  M_\lambda & \mathbf I_2 &  & \\
& & \ddots & \ddots & \\
& & & M_\lambda
\end{bmatrix}
\qquad\text{where }
M_\lambda= \begin{bmatrix} \re \lambda & \im \lambda \\
-\im \lambda & \re \lambda
\end{bmatrix}, \
\mathbf I_2 =  M_1=\begin{bmatrix} 1& 0 \\
0 &1 \end{bmatrix}.
\end{equation}
That is, $J$ is obtained from two complex Jordan blocks of size $k$ for conjugate eigenvalues $\lambda$ and $\ov{\lambda}$. Write $\lambda = |\lambda|\omega$, $|\omega|=1$. Then, $J$ can be written as a product of two commuting factors
\[
J=\begin{bmatrix} M_{\omega} & &  & & \\
&  M_{\omega} &  &  & \\
& & \ddots &  & \\
& & & M_{\omega}
\end{bmatrix}\begin{bmatrix} M_{|\lambda|} & M_{\ov \omega} &  & & \\
&  M_{|\lambda|} & M_{\ov \omega} &  & \\
& & \ddots & \ddots & \\
& & & M_{|\lambda|}
\end{bmatrix}.
\]
Let $D_1, D_2$ denote these factors. Then, $D_1$ is an isometry, whereas $D_2$ has  only one eigenvalue $|\lambda|$. We claim that the Jordan normal form of $D_2$ consists of two blocks each of size $k$. 

Indeed, let $T= D_2-|\lambda| \mathbf I_{2k}$. Then, an easy calculation shows that $T$ is a product of two commuting factors
\[
T=
\begin{bmatrix} M_{\ov \omega} &  &  & &\\
&  M_{\ov \omega} &  &  & \\
& & \ddots & & \\
& & & M_{\ov \omega}
\end{bmatrix}
\begin{bmatrix} \mathbf 0_2 & \mathbf I_2 &  & & \\
&  \mathbf 0_2 & \mathbf I_2 &  & \\
& & \ddots & \ddots & \\
& & & \mathbf 0_2
\end{bmatrix}
\qquad\text{where }
\mathbf 0_2 = \begin{bmatrix} 0 & 0 \\
0 &0 \end{bmatrix}.
\]
Hence, $T$ is nilpotent, $T^{k-1}$ has all zero entries except $2\times 2$ upper right block $M_{\ov \omega^{k-1}}$. Thus, $T^{k-1}$ has rank $2$ and $T^k=0$. This shows the claim. Moreover, the fact that $D_1$ and $D_2$ commute implies that for any $m\in \Z$,
\begin{equation}\label{jon5}
||J^{-m} (D_2)^m||=||(D_1)^{-m}||=1.
\end{equation}
By Lemma \ref{eq}, expansive matrices $J$ and $D_2$ are equivalent.

Define the matrix $B$ such that $S^{-1}BS$ is a block diagonal matrix where each Jordan block $J$ of $S^{-1}AS$ is replaced by a matrix $D_2$. 
This procedure is done for complex eigenvalues $\lambda$ with corresponding Jordan blocks of the form \eqref{jon}. If $\lambda \in \R$ is a real negative eigenvalue, then we replace $k \times k$ Jordan block $J$ by $-J$. Finally, if $\lambda$ is a positive eigenvalue, then we do nothing to $J$. 

By the construction we have defined a block diagonal matrix $S^{-1}BS$ for which two complex Jordan blocks of size $k$ for a conjugate pair $\lambda$ and $\ov \lambda$ of complex eigenvalues of $A$ correspond to two Jordan blocks of size $k$ for the eigenvalue $|\lambda|$ of $B$. In the case of a negative eigenvalue $\lambda$ of $A$, a Jordan block of size $k$ corresponds to a Jordan block of $B$ of the same size, but for positive eigenvalue $|\lambda|$. This shows that (ii)-(iv) hold.

Finally, to prove (i) observe that \eqref{jon5} implies that
for any $m\in \Z$,
\[
||S^{-1}A^{-m}B^m S||=||(S^{-1}A S)^{-m} (S^{-1} B S)^m||=1
\]
Hence, 
\[
\sup_{m\in \Z} ||A^{-m}B^m|| \le ||S|| ||S^{-1}||<\infty.
\]
By Lemma \ref{eq}, $A$ and $B$ are equivalent. 
\end{proof}

We are now ready to give the proof of Theorem \ref{thm:Construct-A_t}.

\begin{proof}[Proof of Theorem \ref{thm:Construct-A_t}]
Let $A$ be an expansive matrix. Let $B$ be the equivalent matrix with all positive eigenvalues which is guaranteed by Lemma \ref{l6.7}. By \cite[Theorem 1]{10.2307/2036109} there exists a real matrix $X$ such that $B=\exp(X)$. Since all eigenvalues of $B$ are $>1$, all eigenvalues of $X$ are positive. For $t\in \R$ define one-parameter group of dilations $A_t=\exp(P\ln t)$, $t>0$, where $P=\frac{1}{c} X$ and $c=\tr(X)$. By Lemma \ref{cf0} dilations $A_t$, $t>1$, are all equivalent with $B=A_{c}$, which in turn is equivalent with $A$.

Finally, the uniqueness of $P$ follows from Lemma \ref{cf2}. Indeed, suppose that $\{A'_t = \exp(P' \ln t)\}_{t>0} $ is another one-parameter group of dilations with a generator $P'$ satisfying (i) and such that $B$ is equivalent to $A'_t$ for some $t>1$. By Lemma \ref{cf0}, $B$ is equivalent to $A'_t$ for all $t>1$. Choose $t_0>1$ such that
\[
|\det(A)|=\det(B)= \det (A'_{t_0}) = \exp(\ln t_0 \tr(P'))= t_0.
\]
Since $A_{t_0}$ and $A'_{t_0}$ are equivalent, have all positive eigenvalues, and $\det (A_{t_0})=\det (A'_{t_0}) $, by Lemma \ref{cf2} we have $A_{t_0}=A'_{t_0}$. Likewise, by Lemma \ref{cf0}, $A_t$ and $A'_t$ are equivalent, have all positive eigenvalues, and $\det (A_{t})=\det (A'_{t})$ for all $t>1$. Thus, $A_t=A'_t$ for all $t>0$, which shows the uniqueness.
\end{proof}

As an immediate corollary of Theorem \ref{thm:Construct-A_t} we have the following result. A similar result to Corollary \ref{cor:Construct-A_t} was observed by Cheshmavar and F\"uhr in \cite[Remark 7.11]{chesh-fuhr}.
\begin{cor}\label{cor:Construct-A_t}
Let $\rho_1$ and $\rho_2$ be the quasi-norms associated to dilations $A_1$ and $A_2$, respectively. Let $P_1$ and $P_2$ be the generators of one-parameter groups of dilations as in Theorem \ref{thm:Construct-A_t} corresponding to $A_1$ and $A_2$, respectively. Then, $\rho_1$ and $\rho_2$ are equivalent if and only if $P_1=P_2$.
\end{cor}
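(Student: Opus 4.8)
The plan is to deduce Corollary \ref{cor:Construct-A_t} directly from the uniqueness clause of Theorem \ref{thm:Construct-A_t} together with the transitivity of quasi-norm equivalence. Recall from Definition \ref{d21} that $\rho_1$ and $\rho_2$ are equivalent precisely when the dilations $A_1$ and $A_2$ are equivalent, and that Theorem \ref{thm:Construct-A_t} attaches to each $A_i$ a one-parameter group $\{(A_i)_t = \exp(P_i \ln t)\}_{t>0}$ with generator $P_i$ satisfying (i), such that $A_i$ is equivalent to $(A_i)_t$ for every $t>1$. The whole argument reduces to chaining these equivalences correctly.

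First I would prove the implication $P_1 = P_2 \Rightarrow$ equivalence. If $P_1 = P_2$, then the two associated groups coincide, so in particular $(A_1)_t = (A_2)_t$ for every $t>1$. Fixing any $t>1$, I use part (ii) of Theorem \ref{thm:Construct-A_t} to get that $A_1$ is equivalent to $(A_1)_t$ and $A_2$ is equivalent to $(A_2)_t = (A_1)_t$. Since equivalence of dilations is an equivalence relation (the defining two-sided bound on quasi-norms is manifestly symmetric and transitive), it follows that $A_1$ and $A_2$ are equivalent, hence so are $\rho_1$ and $\rho_2$.

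For the converse, suppose $\rho_1$ and $\rho_2$ are equivalent, i.e. $A_1$ and $A_2$ are equivalent dilations. I want to apply the uniqueness part of Theorem \ref{thm:Construct-A_t}. The theorem's uniqueness statement says: the group associated with a dilation $A$ is characterized by being associated with $A$ in the sense of (i) and (ii). So the strategy is to show that the group $\{(A_2)_t\}$ attached to $A_2$ also qualifies as a group associated with $A_1$, forcing $P_2 = P_1$ by uniqueness. Its generator $P_2$ satisfies (i) by construction, so I only need to verify condition (ii) for the pair $(A_1, \{(A_2)_t\})$, namely that $A_1$ is equivalent to $(A_2)_t$ for some (equivalently, by the theorem's proof via Lemma \ref{cf0}, every) $t>1$. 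But $A_1$ is equivalent to $A_2$ by hypothesis, and $A_2$ is equivalent to $(A_2)_t$ by part (ii) applied to $A_2$; transitivity then gives that $A_1$ is equivalent to $(A_2)_t$. By the uniqueness clause of Theorem \ref{thm:Construct-A_t} applied to $A_1$, we conclude $P_2 = P_1$.

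The argument is essentially a bookkeeping exercise in transitivity, so the main subtlety — the step I would be most careful about — is confirming that the uniqueness statement of Theorem \ref{thm:Construct-A_t} applies with the hypothesis ``$A$ is equivalent to $A'_t$ for \emph{some} $t>1$'' rather than for all $t>1$; this is exactly the form in which the theorem is stated, so the transitivity chain producing equivalence of $A_1$ with a single $(A_2)_{t}$ is enough to invoke it. I would make sure to phrase both directions so that the only external inputs are Definition \ref{d21}, the existence-and-uniqueness content of Theorem \ref{thm:Construct-A_t}, and the elementary symmetry and transitivity of the equivalence relation on quasi-norms.
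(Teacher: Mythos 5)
Your argument is correct and is precisely the intended derivation: the paper states Corollary \ref{cor:Construct-A_t} as an immediate consequence of Theorem \ref{thm:Construct-A_t} without spelling out a proof, and your chain of equivalences combined with the uniqueness clause (which, as you note, only requires equivalence of $A_1$ with $(A_2)_t$ for \emph{some} $t>1$) is exactly the bookkeeping the authors leave implicit.
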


We can now state the main result of our paper.
\begin{thrm}\label{thm:Hp-Hp} Let $A$ be an anisotropic dilation. Then, there exists an expansive continuous group $\{ A_t \}_{t > 0}$ such that its generator has all positive eigenvalues and discrete and continuous anisotropic Hardy spaces coincide $H_A^p = H_{\{ A_t \}}^p$. That is, for $f \in \mathcal{S}'$, we have
	\[ \| f \|_{H_A^p} \simeq \| f \|_{H_{\{ A_t \}}^p}. \] 
\end{thrm}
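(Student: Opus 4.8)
The plan is to transfer the equivalence of maximal function characterizations from the discrete anisotropic setting to the continuous parabolic setting by showing that the two non-tangential maximal operators control each other pointwise, up to harmless geometric distortions. By Theorem \ref{thm:Construct-A_t} we already have the expansive continuous group $\{A_t\}_{t>0}$ with generator $P$ of trace one, all positive eigenvalues, and with $A$ equivalent to $A_t$ for all $t>1$; in particular the quasi-norm $\tilde\rho$ associated to $\{A_t\}$ is equivalent to the step quasi-norm $\rho_A$ of $A$. This equivalence of quasi-norms is the geometric engine that makes the two scales interchangeable. The central task is then to compare $\tilde M_\varphi f$, built from the continuous dilations $\tilde\varphi_t(y)=t^{-1}\varphi(A_t^{-1}y)$ over $t>0$, with the anisotropic non-tangential maximal function $M_\psi f$, built from the discrete dilations $\psi_k(x)=b^{-k}\psi(A^{-k}x)$ over $k\in\Z$.

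First I would fix an admissible $\varphi\in\mathcal S$ with $\int\varphi\neq 0$ and, using Theorem \ref{sf}, reduce the discrete side to any convenient maximal function of my choosing; likewise, by the already-cited theorem stating that every $\varphi$ with $\int\varphi\neq0$ yields the same $H^p_{\{A_t\}}$, I am free to choose $\varphi$ conveniently on the continuous side. The key comparison I would carry out is $\tilde M_\varphi f \lesssim M^0_{\mathcal S_N}f$ and conversely $M_\psi f \lesssim \tilde M_\varphi f$ pointwise. For the direction dominating the continuous maximal function, I would discretize the scale: given $t>0$, pick the integer $k=k(t)$ with $b^{-k-1}\le t < b^{-k}$ (equivalently $A_{t}$ lies between $A^{-k}$ and $A^{-k-1}$ up to equivalence constants), write $A_t = A^{-k} R_t$ where $R_t = A^{k}A_t$ ranges over a compact family of matrices because $A$ and $A_t$ are equivalent uniformly in the scale. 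Then $\tilde\varphi_t = (\varphi\circ R_t^{-1})_k$ up to the normalizing Jacobian, and since $R_t$ stays in a compact set, the dilated test functions $\varphi\circ R_t^{-1}$ lie (after normalization) in a fixed multiple of $\mathcal S_N$; the aperture constraint $\tilde\rho(x-y)<t$ translates under the quasi-norm equivalence into $\rho_A(x-y)\lesssim b^{-k}$, which is exactly the discrete aperture up to the triangle-inequality dilation. This yields a pointwise bound of $\tilde M_\varphi f$ by a constant multiple of $M_{\mathcal S_N}f$, hence control in $L^p$ by Theorem \ref{sf}. The reverse direction is symmetric: each discrete $\psi_k$ is recovered from a specific continuous dilation $\tilde\varphi_{t}$ with $t\simeq b^{-k}$, and the discrete aperture ball sits inside a continuous aperture ball of comparable radius, so $M_\psi f\lesssim \tilde M_\varphi f$ pointwise.

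The main obstacle I anticipate is the bookkeeping around the \emph{grand} maximal function and the admissible class $\mathcal S_N$. The families $\{\varphi\circ R_t^{-1}\}$ and $\{\psi\circ R_t\}$ must be shown to lie in a fixed bounded subset of $\mathcal S_N$ uniformly over the whole scale; this requires that the matrices $R_t=A^{k(t)}A_t$ be uniformly bounded together with their inverses, which follows from Lemma \ref{eq} applied to the equivalent pair $A$ and $A_t$, but one must verify that the Schwartz seminorms weighted by $\rho_A(\cdot)^N$ transform boundedly under such linear changes of variable, using that $\rho_A$ and $\tilde\rho$ are equivalent and that $\rho_A(R_t^{-1}x)\simeq\rho_A(x)$ uniformly. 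A secondary technical point is that the continuous definition uses the full non-tangential maximal function while the discrete side offers several equivalent choices via Theorem \ref{sf}; I would route the argument through $M^0_{\mathcal S_N}$ and $M_{\mathcal S_N}$ so that the uniform membership in $\mathcal S_N$ is exactly what is needed, invoking Theorem \ref{sf} at the start and at the end to pass between the radial, non-tangential, and grand maximal functions. Once both pointwise inequalities are in hand, taking $L^p$ norms gives $\|f\|_{H^p_A}\simeq\|\tilde M_\varphi f\|_{L^p}=\|f\|_{H^p_{\{A_t\}}}$, which is the claim.
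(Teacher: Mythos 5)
Your forward direction (dominating the continuous maximal function by a discrete grand maximal function via the factorization $A_t=A^{-k}R_t$ with $R_t$ in a compact family) is sound in outline and close in spirit to the paper's Lemma \ref{dc}; the uniformity $\rho_A(R_t^{-1}x)\simeq\rho_A(x)$ that you flag does hold for this particular family, precisely because $\tilde\rho(R_t^{-1}x)=t^{-1}\tilde\rho(A^{-k}x)\simeq t^{-1}b^{-k}\rho_A(x)\simeq\rho_A(x)$. (The paper sidesteps the seminorm-distortion issue entirely by routing through a Euclidean-weighted test class $\mathcal F$ rather than the $\rho_A$-weighted class $\mathcal S_N$, and it absorbs the non-tangential shift into a translate of the test function first; but your variant can be made to work.)

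The genuine gap is in the reverse direction, which is \emph{not} symmetric. Writing $A^{-k}=S_k A_{b^{-k}}^{-1}$ with $S_k$ in a bounded family, you get $\psi_k=\widetilde{(\psi\circ S_k)}_{b^{-k}}$: the discrete dilation $\psi_k$ is a continuous dilation of a $k$-\emph{dependent} linear distortion of $\psi$, not of any single fixed test function, because in general $A$ is only equivalent to, never equal to, a member of $\{A_t\}$. The non-tangential aperture in $\tilde M_\varphi$ absorbs translations of the test function but not linear distortions, so the claimed pointwise bound $M_\psi f\lesssim\tilde M_\varphi f$ fails. To control $f\ast\widetilde{(\psi\circ S_k)}_{b^{-k}}$ by the continuous side you would need a continuous \emph{grand} maximal characterization of $H^p_{\{A_t\}}$, which is not available a priori for general expansive groups — indeed the paper only obtains the $\varphi$-independence of $H^p_{\{A_t\}}$ as a consequence of this very theorem, so invoking it (as you do when you say you are ``free to choose $\varphi$ conveniently on the continuous side'') risks circularity. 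The paper's fix, which your proposal omits, is to first apply Theorem \ref{eqq} to replace $A$ by the exactly-equal group member $A_{t_0}$ (they define the same anisotropic Hardy space since they are equivalent dilations); once $A=A_{t_0}$, the discrete dilations $\varphi_k$ are literally the continuous dilations $\tilde\varphi_{(t_0)^k}$ of the \emph{same} $\varphi$, and the reverse direction collapses to the one-line inequality $M^0_\varphi f\le\tilde M_\varphi f$. Insert that reduction at the start and your argument closes.
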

With this theorem, we are able to associate an anisotropic Hardy space $H_A^p$ with the parabolic PDE \eqref{eq:CT}, in the sense that the fundamental solution to \eqref{eq:CT} can be used as a kernel in the radial maximal characterization of $H_{\{ A_t \}}^p$.  In light of Theorem \ref{eqq} it is tempting to conclude that Theorem \ref{thm:Construct-A_t}  already accomplishes this. However, Theorem \ref{thm:Construct-A_t} implies only that for \textit{each} $t > 1$,  $H_A^p$ and $H_{A_t}^p$, both as anisotropic Hardy spaces (with respect to dilations $A$ and $A_t$), are equivalent. What we need is slightly stronger: the anisotropic Hardy space $H_A^p$, defined via discrete maximal functions, is the same as the parabolic Hardy space $H_{\{ A_t \}}^p$, defined via a continuous maximal function as shown below. 

\begin{lem}\label{dc}
 Let $\{ A_t \}_{t > 0}$ be an expansive continuous group and let $A=A_{t_0}$, where $t_0>1$. Then for any
$f \in \mathcal{S}'$, we have
\[ \| f \|_{H_{A}^p} \simeq \| f \|_{H_{\{ A_t \}}^p}. 
\] 
\end{lem}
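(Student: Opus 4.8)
The plan is to prove the two quasinorms comparable by relating the discrete maximal functions that characterize $H_A^p$ to the parabolic maximal function defining $H_{\{ A_t \}}^p$, exploiting that the discrete dilation scales form the geometric subsequence $\{t_0^k\}_{k\in\Z}$ inside the continuous parameter range. Fix $\varphi\in\mathcal S$ with $\int\varphi\neq 0$ for use on both sides, set $\gamma=\tr(P)$, so that $b:=|\det A|=|\det A_{t_0}|=t_0^\gamma$, and record the identity $\varphi_k=\tilde\varphi_{t_0^k}$ for every $k\in\Z$, which is immediate from $A^{-k}=A_{t_0^k}^{-1}$ and $b^{-k}=t_0^{-\gamma k}$. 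To translate between the two non-tangential apertures I first compare the quasi-norms: since $(\tilde\rho)^\gamma(A_{t_0}x)=(t_0\tilde\rho(x))^\gamma=b\,(\tilde\rho)^\gamma(x)$, the map $(\tilde\rho)^\gamma$ is a homogeneous quasi-norm for the discrete dilation $A=A_{t_0}$; because any two quasi-norms associated with $A$ are equivalent, $\tilde\rho\simeq\rho_A^{1/\gamma}$, and hence the parabolic region $\{\tilde\rho(x-y)<t\}$ agrees, up to a fixed multiplicative change of the aperture, with an anisotropic non-tangential region.

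For the bound $\|f\|_{H_A^p}\lesssim\|f\|_{H_{\{ A_t \}}^p}$ I pass through radial maximal functions, where no aperture bookkeeping is needed. Indeed $M^0_\varphi f(x)=\sup_{k\in\Z}|f\ast\varphi_k(x)|=\sup_{k\in\Z}|f\ast\tilde\varphi_{t_0^k}(x)|\le\sup_{t>0}|f\ast\tilde\varphi_t(x)|\le\tilde{M}_\varphi f(x)$, the last inequality because the radial value is the non-tangential value at $y=x$. Since Theorem \ref{sf} gives $\|M^0_\varphi f\|_{L^p}\simeq\|f\|_{H_A^p}$, this yields $\|f\|_{H_A^p}\lesssim\|\tilde{M}_\varphi f\|_{L^p}=\|f\|_{H_{\{ A_t \}}^p}$.

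The reverse inequality is the substantive step: the supremum over all $t>0$ must be dominated by discrete data, and the right object is the anisotropic grand maximal function. Given $t>0$, I choose $k\in\Z$ with $t_0^k\le t<t_0^{k+1}$ and write $t=t_0^k\sigma$ with $\sigma\in[1,t_0)$. Using $A_t=A^kA_\sigma$ one computes $\tilde\varphi_t=\sigma^{-\gamma}(\psi_\sigma)_k$, where $\psi_\sigma:=\varphi\circ A_\sigma^{-1}$ and $(\psi_\sigma)_k$ is its level-$k$ anisotropic dilation. As $\sigma$ runs over the compact interval $[1,t_0]$, the family $\{\psi_\sigma\}$ has $\mathcal{S}_N$-seminorms bounded by a constant $c=c(\varphi,N,t_0)$, so $c^{-1}\psi_\sigma\in\mathcal{S}_N$; moreover $\tilde\rho(x-y)<t$ places $y$, via $\tilde\rho\simeq\rho_A^{1/\gamma}$, in the anisotropic non-tangential region attached to $\varphi_k$ of a fixed bounded aperture. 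Combining these, $|f\ast\tilde\varphi_t(y)|=\sigma^{-\gamma}|f\ast(\psi_\sigma)_k(y)|\lesssim M_{\mathcal{S}_N}f(x)$, and taking the supremum over $t$ and the admissible $y$ gives the pointwise bound $\tilde{M}_\varphi f(x)\lesssim M_{\mathcal{S}_N}f(x)$. Theorem \ref{sf} then yields $\|f\|_{H_{\{ A_t \}}^p}=\|\tilde{M}_\varphi f\|_{L^p}\lesssim\|M_{\mathcal{S}_N}f\|_{L^p}\simeq\|f\|_{H_A^p}$, completing the equivalence.

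The main obstacle is this reverse direction, concentrated in the uniform control of $\{\psi_\sigma=\varphi\circ A_\sigma^{-1}:\sigma\in[1,t_0]\}$ in the seminorm $\|\cdot\|_{\mathcal{S}_N}$. Boundedness of the Euclidean derivatives $\p^\alpha\psi_\sigma$ is routine from continuity of $\sigma\mapsto A_\sigma$ on a compact set, but one must also check that the anisotropic weight $\rho_A(x)^N$ entering $\|\cdot\|_{\mathcal{S}_N}$ is preserved under the compact family $A_\sigma^{-1}$; this is exactly where $\tilde\rho\simeq\rho_A^{1/\gamma}$ together with the homogeneity $\tilde\rho(A_\sigma^{-1}x)=\sigma^{-1}\tilde\rho(x)$ enter, giving $\rho_A(A_\sigma^{-1}x)\simeq\rho_A(x)$ uniformly in $\sigma$. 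A secondary point is the discrepancy of aperture constants between $\{\tilde\rho<t\}$ and the anisotropic region at level $k$, which I absorb using the aperture-independence of the grand maximal function characterization of $H_A^p$ \cite{MR1982689} and by taking $N$ sufficiently large, as required in Theorem \ref{sf}.
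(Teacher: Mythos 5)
Your proof is correct and follows essentially the same route as the paper: the easy direction via the pointwise bound $M^0_\varphi f \le \tilde{M}_\varphi f$ at the discrete scales $t=t_0^k$, and the reverse direction via the decomposition $t=t_0^k\sigma$ with $\sigma\in[1,t_0]$, uniform seminorm control of the compact family $\varphi\circ A_\sigma^{-1}$, and an appeal to the grand maximal function characterization of Theorem \ref{sf}. The only real divergence is in bookkeeping for the non-tangential supremum: the paper absorbs it by translating the test function ($\phi=\varphi(\cdot+z)$ with $\tilde{\rho}(z)<1$) so as to work with a \emph{radial} grand maximal function over the Euclidean-seminorm class $\mathcal{F}_N$, whereas you compare $\tilde{\rho}$ with $\rho_A^{1/\gamma}$ and invoke aperture-independence of the anisotropic non-tangential grand maximal function $M_{\mathcal{S}_N}$ from \cite{MR1982689}; both devices are standard and legitimate.
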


\begin{proof}
Without loss of generality, by rescaling we can assume that generator $P$ of $\{A_t\}_{t>0}$ satisfies $\tr(P)=1$.
We start with the inclusion $H_A^p \supseteq H_{\{ A_t \}}^p$. Let $f \in H_{\{ A_t \}}^p$, that is, $\tilde{M}_{\varphi} f \in L^p$. To show that $f \in H_A^p$, by Theorem \ref{sf} it suffices to establish the pointwise inequality
	\begin{align}\label{est:rad-nont}
	 {M}_{\varphi}^0 f(x) \leq \tilde{M}_{\varphi} f(x).
	\end{align}
Note that for simplicity, we majorize the (discrete) radial maximal operator with a non-tangential (continuous) maximal operator, which follows immediately from the definitions. Indeed, we convert the continuous dilation (on the right) to the discrete dilation (on the left) by setting $t = (t_0)^k$, where $k \in \Z$. Since $b=|\det A|= t_0$ we have
	\[ b^{-k} \varphi(A^{-k} y) = \tilde{\varphi}_{(t_0)^k} (y). \]
Taking the supremum over $k\in \Z$ on the left-hand side and over $t > 0$ and $\tilde \rho(x - y) < t$, yields \eqref{est:rad-nont}.

For the reverse inclusion, let  $N \in \N$ be large enough so that if $f\in H^p_A$, then $M^0_{\mathcal{F}} f \in L^p$, where
\[
\mathcal{F} = \mathcal{F}_N = \{ \varphi \in \mathcal{S} : \| \varphi \|_{\alpha,N}:= \sup_{x\in \R^n} |\partial^\alpha \varphi(x) |(1+|x|)^N \leq 1, \  |\alpha| \leq N \}.
\]
Consider a continuous variant of radial and maximal function given by
\[
\tilde{M}^0_{\mathcal{F}} f(x) =\sup_{\varphi \in \mathcal F} \sup_{t>0} |f * \tilde \varphi_t(x)|. 
\] 
In light of Theorem \ref{sf}, it suffices to show that for any $\varphi \in \mathcal{F}$, there exist positive constants $c_1$ and $c_2$, independent of $f$, such that
	\begin{align}\label{est:Mcont-Maniso}
	\tilde{M}_{\varphi} f(x) \leq c_1 \tilde{M}^0_{\mathcal{F}} f(x) \leq c_2 M^0_{\mathcal{F}} (x).
	\end{align}

To establish the first inequality in \eqref{est:Mcont-Maniso} note that for any  $x\in \R^n$ we have
\[
\begin{aligned}
\tilde{M}_{\varphi} f(x) &= \sup_{t>0} \sup_{\tilde \rho(x-y)<t}  |f * \tilde \varphi_t(y)| 
= \sup\{ |f * \tilde \varphi_t(x+A_t z) |: t>0, \ \tilde \rho(z)<1 \}
\\
&= \sup\{ |f * \tilde \phi_t(x) |: t>0, \ \phi(\cdot)=\varphi (\cdot + z) \text{ for some }\tilde \rho(z)<1 \}.
\end{aligned}
\]
The semi-norms of $\phi$ can be crudely estimated as
\[
||\phi||_{\alpha,N} = \sup_{x\in \R^n} |\partial^\alpha \varphi(x) |(1+|x-z|)^N
\le 2^N(1+|z|)^N \sup_{x\in \R^n} |\partial^\alpha \varphi(x) |(1+|x|)^N \le 2^N(1+|z|)^N.
\]
Taking supremum over $z\in \R^n$ such that $\tilde \rho(z)<1$ shows that $\phi \in c_1 \mathcal F$ for some constant $c_1$, which yields the first inequality.

To establish the second inequality in \eqref{est:Mcont-Maniso}, we first show that it holds if $t \in [1, t_0]$ and then we extend it to all possible values of $t>0$.  Fix $\varphi \in \mathcal F$. For $x \in \R^n$ and $t \in [1, t_0]$, we write
	\[ |(f \ast \tilde{\varphi}_t)(x)| = \bigg| \int_{\R^n} f(x - z) t^{-1} \varphi(A_t^{-1} z) dz \bigg| 
	= |f \ast \psi(x)|, \]
	where $\psi(z) = t^{-1} \varphi(A_t^{-1}z)$. Observe that by chain rule, the partial derivatives $\p^{\alpha} \psi$, are controlled by $\p^{\beta} \varphi$, where $|\beta|\le |\alpha|$ as well as  norms of matrices $A_t^{-1}$, where $t \in [1, t_0]$. Indeed, by the chain rule, see \cite[Lemma 5.5]{pva}, there exists a constant $C>0$ such that 
	\[ 
	\begin{aligned}
	||\psi||_{\alpha,N} &\le C t^{-1} ||(A_t)^{-1}||^{|\alpha|} \sup_{|\beta| \le |\alpha|} \sup_{x\in \R^n} |\p^{\alpha}\phi ((A_t)^{-1}x)|(1+|x|)^N
	\\
&= C t^{-1} ||(A_t)^{-1}||^{|\alpha|} \sup_{|\beta| \le |\alpha|} \sup_{x\in \R^n} |\p^{\alpha}\phi (x)|(1+|A_t x|)^N \le  C t^{-1} ||(A_t)^{-1}||^{|\alpha|} ||A_t||^{N}.
\end{aligned}
	\]
Since $t\in [1,t_0]$ we deduce that $\psi \in c\mathcal{F}$ with $c$ depending on $t_0$.

Now let $t > 0$ be arbitrary. Let $k \in \Z$ be such that $t \in [(t_0)^k, (t_0)^{k + 1}]$. If we define $\tilde{t} = t/(t_0)^k$, then $\tilde{t} \in [1, t_0]$ and we have the identities $A_{t} = A_{\tilde{t} (t_0)^k} = A_{\tilde{t}} A_{(t_0)^k}$. Then,
	\begin{align*}
	f \ast \tilde{\varphi}_t (x) &= \int_{\R^n} f(x - z) t^{-1} \varphi((A_t)^{-1} z) dz = \frac{1}{\tilde{t}} \int_{\R^n} f(x - z) (t_0)^{-k} \varphi((A_{\tilde{t} (t_0)^k})^{-1} z) dz \\
	&= \frac{1}{\tilde{t}} \int_{\R^n} f(x - z) {\psi}_{k} (z) dz = \frac{1}{\tilde{t}} (f \ast \psi_{k})(x),
	\end{align*}
where $\psi(z) = \varphi((A_{\tilde{t}})^{-1} z)\in c\mathcal{F}$ and $\psi_k(z) = b^{-k} \psi(A^{-k}z)$. Therefore, taking the supremum of the continuous dilation over $t > 0$, with respect to a test function $\varphi \in \mathcal{F}$, is equivalent to taking the supremum of the discrete dilation over $k \in \Z$, with respect to another test function $\psi \in c\mathcal{F}$. Hence, we obtain the second inequality of \eqref{est:Mcont-Maniso}. This completes our proof.
\end{proof}

\begin{proof}[Proof of Theorem \ref{thm:Hp-Hp}]
Let $A$ be an expansive dilation. By Theorem \ref{thm:Construct-A_t} we can find an associated continuous expansive group $\{A_t\}_{t>0}$ such that its generator has all positive eigenvalues and $A$ is equivalent with dilation $A_{t_0}$ for some/all $t_0>1$. By Theorem \ref{eqq}, discrete anisotropic Hardy spaces $H^p_A$ and $H^p_{A_{t_0}}$ coincide with equivalent quasi-norms. Hence, we obtain the required conclusion by Lemma \ref{dc}.
\end{proof}

\section{Equivalence of dilations up to linear transformations}\label{S4}
In this section, we provide the classification of anisotropic Hardy spaces by correcting and expanding the results shown  in \cite{MR1982689}. 
The following result was shown by the first author \cite[Theorem 10.3]{MR1982689}.

\begin{thrm}\label{t10.3}
Let $\rho_1$ and $\rho_2$ be the quasi-norms associated to dilations $A_1$ and $A_2$, respectively. If $\rho_1$ and $\rho_2$ are equivalent, then for all $r>1$ and all $m=1,2,\ldots$
\begin{equation}\label{sp}
\spa \bigcup_{|\la|=r^\epsilon} \ker(A_1 - \la \mathbf I)^m =
\spa \bigcup_{|\la|=r} \ker(A_2 - \la \mathbf I)^m,
\end{equation}
where
\begin{equation}\label{ve}
\ve=\ve(A_1,A_2)=\ln |\det A_1|/\ln |\det A_2|.
\end{equation}
In \eqref{sp}, we treat $A_1$ and $A_2$ as linear maps on $\C^n$ and $\lambda$ varies over their complex eigenvalues.
\end{thrm}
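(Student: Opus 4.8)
The plan is to route everything through the generator $P$ of the continuous group attached to a dilation, exploiting the fact (Corollary \ref{cor:Construct-A_t}) that $\rho_1$ and $\rho_2$ are equivalent precisely when their generators agree, $P_1=P_2$. Concretely, for an expansive matrix $A$ with associated generator $P$, I would first establish the eigenspace identity
\[
\spa\bigcup_{|\la|=r}\ker(A-\la\mathbf I)^m = \ker\Big(P-\tfrac{\ln r}{\ln|\det A|}\mathbf I\Big)^m
\qquad\text{for all }r>1,\ m\ge 1,
\]
both sides being regarded as subspaces of $\C^n$ (when $r$ is not the modulus of an eigenvalue of $A$, both sides are $\{0\}$). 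Granting this, the theorem is immediate: with $s=\ln r/\ln|\det A_2|$, the right-hand side applied to $(A_2,r)$ is $\ker(P_2-s\mathbf I)^m$, while applied to $(A_1,r^\ve)$ it is $\ker(P_1-s_1\mathbf I)^m$ where $s_1=\ve\ln r/\ln|\det A_1|=\ln r/\ln|\det A_2|=s$ by the definition \eqref{ve} of $\ve$. Since $P_1=P_2$ by Corollary \ref{cor:Construct-A_t}, the two spans in \eqref{sp} coincide.

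To prove the eigenspace identity I would follow the construction underlying Theorem \ref{thm:Construct-A_t}. Let $B$ be the positive-eigenvalue matrix equivalent to $A$ furnished by Lemma \ref{l6.7}, write $B=\exp(X)$ with $X$ having positive eigenvalues, and set $c=\tr(X)$ and $P=X/c$. Then: (i) by Lemma \ref{l6.7} we have $\det B=|\det A|$, hence $c=\tr(X)=\ln\det B=\ln|\det A|$; (ii) since $B=\exp(X)$ and $\exp'$ never vanishes, the holomorphic functional calculus gives $\ker(B-r\mathbf I)^m=\ker(X-(\ln r)\mathbf I)^m$ for every $r>1$; and (iii) since $X-(\ln r)\mathbf I=c\,(P-\tfrac{\ln r}{c}\mathbf I)$ and scaling by $c\ne 0$ preserves kernels, $\ker(X-(\ln r)\mathbf I)^m=\ker(P-\tfrac{\ln r}{c}\mathbf I)^m$. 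Combining (i)--(iii) reduces the identity to the single subspace equality
\[
\spa\bigcup_{|\la|=r}\ker(A-\la\mathbf I)^m=\ker(B-r\mathbf I)^m,
\]
that is, to a subspace-level refinement of the dimension count \eqref{dim} in Lemma \ref{l6.7}.

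The main obstacle — the step I would spend the most care on — is exactly this last equality, since \eqref{dim} only records that the two sides have equal dimension for each $m$, whereas here I need them to be the \emph{same} subspace for every $m$, not merely after passing to full generalized eigenspaces. I would establish it block by block using the commuting factorization $J=D_1D_2$ from the proof of Lemma \ref{l6.7}: over $\C$ the isometric factor $D_1$ is diagonalizable with eigenvalues $\omega,\ov\omega$, its eigenspaces $W_\omega,W_{\ov\omega}$ are $D_2$-invariant (as $D_1$ and $D_2$ commute), and on $W_\omega$ one has $A-\la\mathbf I=\omega\,(D_2-|\la|\mathbf I)$, so that $\ker(A-\la\mathbf I)^m=\ker(D_2-|\la|\mathbf I)^m\cap W_\omega$, and symmetrically $\ker(A-\ov\la\mathbf I)^m=\ker(D_2-|\la|\mathbf I)^m\cap W_{\ov\omega}$. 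Their direct sum exhausts $\ker(D_2-|\la|\mathbf I)^m=\ker(B-|\la|\mathbf I)^m$ on that block because $D_2$ respects the splitting $W_\omega\oplus W_{\ov\omega}$. Summing over the blocks with $|\la|=r$ and conjugating by the fixed $S\in GL(n,\R)$ of Lemma \ref{l6.7} yields the desired equality. The real-eigenvalue blocks (where $D_1$ is $\pm\mathbf I$) are handled the same way and are easier, which completes the proposed argument.
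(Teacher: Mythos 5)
Your argument is correct, but it is not the route the paper takes: Theorem \ref{t10.3} is stated here only as a citation of \cite[Theorem 10.3]{MR1982689}, whose original proof is a direct one, comparing the growth of $\|A_1^k x\|$ and $\|A_2^{\lfloor \ve k\rfloor}x\|$ on generalized eigenspaces via the boundedness criterion of Lemma \ref{eq}. You instead derive \eqref{sp} from the machinery built later in this paper: you reduce everything to the generator $P$ of Theorem \ref{thm:Construct-A_t} and invoke Corollary \ref{cor:Construct-A_t} ($\rho_1\sim\rho_2 \iff P_1=P_2$). This is legitimate and non-circular, since Theorem \ref{thm:Construct-A_t} and Corollary \ref{cor:Construct-A_t} rest only on Lemmas \ref{eq}--\ref{l6.7} and not on Theorem \ref{t10.3}; and your key intermediate step, $\spa\bigcup_{|\la|=r}\ker(A-\la\mathbf I)^m=\ker(B-r\mathbf I)^m$, is a genuine strengthening of Lemma \ref{l6.7}(iv) from a dimension count to an identity of subspaces, which you prove correctly from the commuting factorization $J=D_1D_2$ (the only point left implicit is that $(J-\la\mathbf I)^m$ is invertible on $W_{\ov\omega}$ because $D_2$ has sole eigenvalue $|\la|$ and $\omega^2\ne 1$ for $\la\notin\R$, which is what forces $\ker(J-\la\mathbf I)^m\subseteq W_\omega$). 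The passage from $B=\exp(X)$ to $\ker(B-r\mathbf I)^m=\ker(X-(\ln r)\mathbf I)^m$ is also sound, e.g.\ by writing $\exp(X)-e^\mu\mathbf I=e^\mu N(\mathbf I+N/2+\cdots)$ with $N=X-\mu\mathbf I$ nilpotent on each generalized eigenspace. What your approach buys is a cleaner conceptual statement (both sides of \eqref{sp} are identified with $\ker(P-\tfrac{\ln r}{\ln|\det A_2|}\mathbf I)^m$, making the appearance of $\ve$ transparent) and a subspace-level refinement of \eqref{dim} that would also streamline the proof of Theorem \ref{t10.10}; what it costs is dependence on the full construction of Lemma \ref{l6.7} and the uniqueness argument of Theorem \ref{thm:Construct-A_t}, where the original proof is elementary and self-contained.
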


In \cite{MR1982689} it was incorrectly claimed the converse to Theorem \ref{t10.3} also holds. Cheshmavar and F\"uhr \cite[Remark 7.4]{chesh-fuhr} have given an example showing that the converse is actually false. To illustrate this, we can use Corollary \ref{cor:Construct-A_t}: two dilations are equivalent exactly when their corresponding generators (of trace 1) are exactly the same. Consider the following example.

\begin{example}\label{2c02} For any $c\in \R$, consider $2\times 2$ dilation
\[
A_c = \begin{bmatrix} 2 & c \\
0 & 2 \end{bmatrix}.
\]
One can easily compute that
\[
A_c = \exp\bigg( \begin{bmatrix} \ln 2 & c/2 \\
0 & \ln 2 \end{bmatrix} \bigg).
\]
Hence, the generator $P_c$ of a one-parameter group of dilations from Theorem \ref{thm:Construct-A_t} corresponding to $A_c$ is given by
\[
P_c = \begin{bmatrix} 1/2 & c/(2\ln 2) \\
0 & 1/2 \end{bmatrix}.
\]
By Corollary \ref{cor:Construct-A_t}, dilations $A_c$ are not equivalent to each other for different choices of $c\in \R$. Obviously, the choice of $c=0$ corresponds to the classical isotropic setting. In general, by Lemma \ref{cf2}, matrices of the form
\begin{equation}\label{2star}
\begin{bmatrix}
2 & * & * & * \\
  & 2 & * & * \\
  &   &  \ddots  & * \\
  &  & & 2 \\
  \end{bmatrix}
\end{equation}
are equivalent if and only if all entries above the diagonal are identical.
\end{example}

Example \ref{2c02} suggests that it is rare when two dilations are equivalent. The situation changes drastically when we identify dilations up to a similarity. In this scenario dilations $A$ and $S^{-1}AS$, where $S\in GL_n(\R)$ are not distinguished. Hence, we are interested in equivalence of quasi-norms up to a linear transformation, see \cite[Definition 10.9]{MR1982689}. As a consequence of Corollary \ref{pn} we will see that the number of non-equivalent quasi-norms corresponding to $n\times n$ matrices of the form \eqref{2star} is actually finite and equal to the partition function $p(n)$.

This is a consequence of the following result, see \cite[Theorem 10.10]{MR1982689}. Since the original proof of this fact relied on incorrect formulation of \cite[Theorem 10.5(ii)]{MR1982689} due to the above mentioned problem with the converse of Theorem \ref{t10.3}, we need to give a corrected proof.

\begin{thrm}\label{t10.10}
Suppose we have two dilations $A_1$ and $A_2$ on $\R^n$. The following are equivalent:
\begin{enumerate}
\item
the quasi-norms $\rho_1$ and $\rho_2$ associated to $A_1$ and $A_2$, respectively, are equivalent up to a linear transformation, i.e., there is a constant $c>0$ and an invertible $n\times n$ matrix $S$ such that
\[
1/c \rho_1(x) \le \rho_2(Sx) \le c \rho_1(x) \qquad\text{for all }x\in\R^n.
\]

\item
for all $r>1$ and $m=1,2,\ldots$ we have
\begin{equation}\label{cdim}
\sum_{|\la|=r^\ve} \dim \ker(A_1 - \la \mathbf I)^m =
\sum_{|\la|=r} \dim \ker(A_2 - \la \mathbf I)^m,
\qquad\text{where } \ve = \frac{\ln |\det A_1|}{\ln |\det A_2|}.
\end{equation}
\end{enumerate}
\end{thrm}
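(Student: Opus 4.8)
The plan is to reduce the statement about equivalence up to linear transformation to the already-established rigidity result in Lemma \ref{cf2}, using the passage to continuous groups furnished by Theorem \ref{thm:Construct-A_t} and the normal-form matching of Lemma \ref{l6.7}. The key conceptual point is that condition (ii) is precisely the statement that, after replacing $A_1$ and $A_2$ by the equivalent positive-eigenvalue matrices $B_1$ and $B_2$ guaranteed by Lemma \ref{l6.7} and passing to their trace-one generators $P_1,P_2$, the two generators are conjugate by an element of $GL_n(\R)$. Equivalence up to a linear transformation is exactly conjugacy of the associated structures, so the whole theorem becomes a comparison of Jordan data.

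First I would translate (i) into generator language. By Theorem \ref{thm:Construct-A_t} each dilation $A_i$ has a canonical trace-one generator $P_i$, and by Corollary \ref{cor:Construct-A_t} the quasi-norms $\rho_1,\rho_2$ are equivalent iff $P_1=P_2$. Applying this to the pair $(S^{-1}A_1 S, A_2)$ — noting that conjugation by $S$ commutes with the exponential/logarithm construction, so the generator of $S^{-1}A_1S$ is $S^{-1}P_1 S$ — one sees that (i) holds iff there is an invertible $S$ with $S^{-1}P_1 S = P_2$, i.e.\ iff $P_1$ and $P_2$ are similar over $\R$. The normalization $\tr(P_i)=1$ forces $\det A_{i,t}=t$ and is the reason the exponent $\ve$ appears: rescaling the generator of $A_1$ to trace one rescales its eigenvalues by a factor involving $\ln|\det A_1|$, which produces the $r^\ve$ versus $r$ matching in \eqref{cdim}.

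Next I would identify (ii) with equality of Jordan data of $P_1$ and $P_2$. Two real matrices are conjugate over $\R$ iff they have the same real Jordan normal form, equivalently the same dimensions $\dim\ker(P_i-\mu\mathbf I)^m$ for every eigenvalue $\mu$ and every $m$. Using the correspondence in Lemma \ref{l6.7} — each Jordan block of $A_i$ for $\lambda$ matches a block of $B_i$ of the same size for $|\lambda|$, and $B_i=\exp(c_iP_i)$ so the block of $P_i$ sits at eigenvalue $(\ln|\lambda|)/(\ln|\det A_i|)$ after the trace-one rescaling — the condition $S^{-1}P_1S=P_2$ becomes, grouping eigenvalues by modulus, exactly the counting identity \eqref{cdim}. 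Here the sum over $|\lambda|=r$ on the right and $|\lambda|=r^\ve$ on the left is just the statement that equal generator-eigenvalues correspond to moduli related by the $\ve$-power, which is \eqref{ve}.

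The main obstacle I expect is the bookkeeping in the previous paragraph: one must check carefully that the block-size-preserving correspondence of Lemma \ref{l6.7} lines up, under the logarithm $P_i=\tfrac1{c_i}\log B_i$, with the partition of eigenvalues by modulus appearing in \eqref{cdim}, and in particular that conjugate complex pairs of $A_i$ (which each contribute a block to $B_i$ at $|\lambda|$) are counted with the correct multiplicity on both sides. Concretely, the delicate step is verifying that $\dim\ker(P_1-\mu\mathbf I)^m=\dim\ker(P_2-\mu\mathbf I)^m$ for all $\mu,m$ is equivalent to \eqref{cdim}; the forward direction (i)$\Rightarrow$(ii) is then essentially the rigidity of Lemma \ref{cf2} applied to $A_{1,t_0}$ and a suitably conjugated $A_{2,t_0}$ with matching determinant, while the converse reconstructs $S$ from the matching Jordan bases. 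Once this dictionary between \eqref{cdim} and generator conjugacy is pinned down, both implications follow from the uniqueness in Theorem \ref{thm:Construct-A_t} and Lemma \ref{cf2} without further analytic input.
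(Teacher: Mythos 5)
Your proposal is correct, and for the harder implication (ii) $\Rightarrow$ (i) it is essentially the paper's argument: both pass through Lemma \ref{l6.7} to positive-eigenvalue representatives, rescale along the one-parameter group of Theorem \ref{thm:Construct-A_t} so the determinants match, compare Jordan data block by block, and conclude conjugacy. Where you genuinely differ is in organizing the whole theorem as a single chain ``(i) $\iff$ $P_1 \sim P_2$ $\iff$ identical Jordan data of the generators $\iff$ \eqref{cdim}''. The paper does not prove (i) $\Rightarrow$ (ii) this way: it applies Theorem \ref{t10.3} (the span-of-kernels statement) to $A_1$ and $S^{-1}A_2S$ and then takes dimensions, so its two directions are handled asymmetrically. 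Your route buys symmetry and avoids Theorem \ref{t10.3} entirely, at the cost of having to justify two points the paper sidesteps: first, that the canonical trace-one generator of $S^{-1}A_2S$ is $S^{-1}P_2S$ --- true, but it requires noting that simultaneous conjugation preserves equivalence (e.g.\ via Lemma \ref{eq}) and then invoking the uniqueness clause of Theorem \ref{thm:Construct-A_t}; and second, the exp/log dictionary by which a Jordan block of $A_i$ at $\lambda$ yields a block of $P_i$ at $\ln|\lambda|/\ln|\det A_i|$, which is precisely why $|\la|=r^\ve$ on the left of \eqref{cdim} and $|\la|=r$ on the right land on the same generator eigenvalue $\ln r/\ln|\det A_2|$. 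You correctly flag both as the delicate steps, and they do go through, so the proposal is sound.
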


\begin{proof}[Proof of Theorem \ref{t10.10}]
Suppose that two quasi-norms $\rho_1$ and $\rho_2$ are equivalent up to a linear transformation. Note that $\rho_2(S\cdot)$ is a quasi-norm associated with the dilation $S^{-1}A_2S$ since
\[
\rho_2(S(S^{-1}A_2Sx))=|\det A_2| \rho_2(Sx)= |\det(S^{-1}A_2S)| \rho_2(x).
\]
Hence, the quasi-norms  $\rho_1$ and $\rho_2$ are equivalent up to a linear transformation if and only if $A_1$ is equivalent to $S^{-1}A_2S$ for some $S\in GL_n(\R)$.

Since the quasi-norms $\rho_1$ and $\rho_2(S\cdot)$ are equivalent, Theorem \ref{t10.3} implies that for any  $r>1$, $m=1,2,\ldots$
\begin{multline*}
\spa \bigcup_{|\la|=r^\epsilon} \ker(A_1 - \la \mathbf I)^m
=
\spa \bigcup_{|\la|=r} \ker(S^{-1}A_2S - \la \mathbf I)^m
\\
= \spa \bigcup_{|\la|=r}  \ker(S^{-1}(A_2-\la \mathbf I )^mS)
= S^{-1}\bigg(\spa \bigcup_{|\la|=r}  (\ker(A_2-rId)^m)\bigg).
\end{multline*}
Hence, \eqref{cdim} holds.

It remains to show the converse implication (ii) $\implies$ (i). By Lemma \ref{l6.7} there exist expansive dilations $B_1$ and $B_2$ with positive eigenvalues which are equivalent to $A_1$ and $A_2$, respectively. Since original dilations $A_1$ and $A_2$ satisfy \eqref{cdim}, by Lemma \ref{l6.7}(iv)  their positive eigenvalue counterparts $B_1$ and $B_2$ satisfy
\[
\dim \ker(B_1 - r^\ve \mathbf I)^m =
 \dim \ker(B_2 - r \mathbf I)^m,\qquad\text{for all }r>1, m=1,2,\ldots
\]
Let $\rho_1'$ and $\rho_2'$ be quasi-norms associated to $B_1$ and $B_2$, respectively. Since $\rho_i$ and $\rho'_i$ are equivalent for $i=1,2$, it suffices to show that $\rho'_1$ and $\rho'_2$ are equivalent up to a linear transformation.

Using Theorem \ref{thm:Construct-A_t} we can rescale one of dilations, say $B_1$, to an equivalent dilation $B_1'$ so that  $\det B'_1 = \det B_2$, without affecting the conclusion (i). More precisely, we consider the unique one-parameter group of dilations $\{\exp(P\ln t)\}_{t>0}$  such that $B_1=\exp(P\ln t_1)$ for $t_1=\det B_1>1$, where $P$ is the generator as in Theorem \ref{thm:Construct-A_t}. Define $B_1'= \exp(P\ln t_2)$, where $t_2=\det B_2$. Then, $B_1'$ is equivalent to $B_1$ and $\det B'_1 = \det B_2$.   

Moreover, we claim that
\begin{equation}\label{jc}
\dim \ker(B'_1 - r \mathbf I)^m =
 \dim \ker(B_2 - r \mathbf I)^m
 \qquad\text{for all }r>1, m=1,2,\ldots
\end{equation}
Indeed, if the normal Jordan form of $P$ has a Jordan block of size $k$ corresponding to an eigenvalue $\lambda>0$, then $\exp(tP)$ has in its normal Jordan form a block of the same size corresponding to an eigenvalue $e^{\lambda t}$ for any $t\in \R$. In other words, 
\[
\dim \ker(P - \lambda \mathbf I)^m =
 \dim \ker(\exp(P \ln t)  - t^\lambda \mathbf I)^m
 \qquad\text{for all }t,\lambda>0, m=1,2,\ldots
\]
Take any $r>1$ and write it as $r=(t_2)^\lambda$ for some $\lambda>0$. Since $\ve=\ln t_1/\ln t_2$ we have $r^\ve = (t_1)^\lambda$. Hence,
\[
\dim \ker (\exp(P\ln t_2)- r \mathbf I)^m =
\dim \ker(P - \lambda \mathbf I)^m
=\dim \ker(\exp(P\ln t_1) - r^\ve \mathbf I)^m.
\]
This shows \eqref{jc}.

Finally, observe that for any matrix $B$, the number of Jordan blocks of size $\ge m$ corresponding to an eigenvalue $r$ is equal to
\[
\dim \ker(B - r \mathbf I)^m - \dim \ker(B - r \mathbf I)^{m-1}.
\]
Hence, \eqref{jc} implies that the number of Jordan blocks of size $m$ corresponding to an eigenvalue $r$ is the same for both $B_1'$ and $B_2$. Therefore, the matrices $B'_1$ and $B_2$ have the same Jordan normal form. In other words, there is an invertible $n\times n$ matrix $S$ such that $B'_1=S^{-1}B_2S$. This implies that the quasi-norm associated to $B_1'$, which is equivalent to $\rho_1'$, is $\rho'_2(S\cdot)$. This proves that quasi-norms $\rho_1'$ and $\rho_2'$ are equivalent up to linear transformations and so are $\rho_1$ and $\rho_2$. 
\end{proof}

Theorem \ref{t10.10} implies the following classification of expansive dilations according to their Jordan normal form.

\begin{cor}\label{pn} For any $k\le n$, take any sequences
\begin{align}\label{s1}
n_1, \ldots, n_k \in \N & \qquad n_1+ \ldots + n_k=n,
\\
\label{s2}
1<\la_1< \ldots <\lambda_k<2 & \qquad \lambda_1^{n_1} \cdots \lambda_k^{n_k}=2.
\end{align}
and partitions $\boldsymbol \pi^i$, $i=1,\ldots,k$ of $n_i$, i.e.,
\begin{equation}\label{s3}
\boldsymbol \pi^i = ( \pi^i_1 \ge  \ldots \ge \pi^i_{m_i} ) \in \N^{m_i}
\qquad \pi^i_1 +  \ldots + \pi^i_{m_i}=n_i.
\end{equation}
For specified parameters \eqref{s1}, \eqref{s2}, and \eqref{s3}, define the corresponding block diagonal matrix
\begin{equation}\label{s4}
A(\lambda_1,\ldots,\lambda_k; \boldsymbol \pi^1 , \ldots, \boldsymbol \pi^k )
\end{equation}
consisting of Jordan blocks for eigenvalues $\la_i$, $i=1,\ldots,k$, and sizes $\pi^i_j$, $j=1,\ldots, m_i$.  Then, any expansive dilation $A$ in $\R^n$ is equivalent up to a linear transformation to some dilation $A(\lambda_1,\ldots,\lambda_k; \boldsymbol \pi^1 , \ldots, \boldsymbol \pi^k )$. Moreover, this correspondence is $1$-to-$1$. That is, dilations of the form \eqref{s4} for distinct choices of eigenvalues $\lambda_1,\ldots,\lambda_k$ and partitions $\boldsymbol \pi^1 , \ldots, \boldsymbol \pi^k$ are not equivalent up to linear transformations.
\end{cor}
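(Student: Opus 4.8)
The plan is to read off the complete invariant supplied by Theorem \ref{t10.10} and to exploit the freedom to normalize the determinant to $2$, which makes the exponent $\ve$ in \eqref{cdim} equal to $1$. The core point is that once $|\det A_1| = |\det A_2| = 2$ and both dilations have positive eigenvalues, condition (ii) of Theorem \ref{t10.10} collapses: the sum over $|\lambda| = r$ contains at most the single eigenvalue $\lambda = r$, so it reads $\dim\ker(A_1 - r\mathbf I)^m = \dim\ker(A_2 - r\mathbf I)^m$ for all $r > 1$ and $m \ge 1$, which is precisely the statement that $A_1$ and $A_2$ have the same Jordan normal form.

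\textbf{Existence.} Given an expansive dilation $A$, I first apply Lemma \ref{l6.7} to pass to an equivalent dilation $B$ with all positive eigenvalues; since plain equivalence is the special case $S = \mathbf I$ of equivalence up to a linear transformation, this loses nothing. Next I rescale $B$ along its associated one-parameter group (Theorem \ref{thm:Construct-A_t}, exactly as in the rescaling step of the proof of Theorem \ref{t10.10}, using Lemma \ref{cf0}) to an equivalent dilation $B'$ with $\det B' = 2$; because the generator $P$ has positive eigenvalues, $B' = \exp(P \ln 2)$ again has positive eigenvalues, all lying in $(1, 2]$. Finally I conjugate $B'$ into real Jordan normal form: there is $S \in GL_n(\R)$ so that $S^{-1}B'S$ is block diagonal with ordinary Jordan blocks for the distinct eigenvalues $1 < \lambda_1 < \cdots < \lambda_k$, with block sizes recording partitions $\boldsymbol\pi^i$ of the multiplicities $n_i$ and $\prod_i \lambda_i^{n_i} = \det B' = 2$. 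This matrix is exactly one of the forms \eqref{s4}. As in the opening reduction of the proof of Theorem \ref{t10.10}, conjugation by $S$ relates the two quasi-norms by the linear transformation $S$, so $\rho_A$ is equivalent up to a linear transformation to the quasi-norm of $A(\lambda_1, \ldots, \lambda_k; \boldsymbol\pi^1, \ldots, \boldsymbol\pi^k)$, by transitivity of this relation.

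\textbf{Injectivity.} Suppose two canonical matrices $C_1 = A(\lambda_1,\ldots;\boldsymbol\pi)$ and $C_2 = A(\mu_1,\ldots;\boldsymbol\sigma)$ have quasi-norms equivalent up to a linear transformation. Both have determinant $2$, so $\ve = 1$ and Theorem \ref{t10.10}(ii) yields $\dim\ker(C_1 - r\mathbf I)^m = \dim\ker(C_2 - r\mathbf I)^m$ for all $r > 1$, $m \ge 1$ (again dropping the sum over $|\lambda| = r$ by positivity). Since the number of Jordan blocks of size $\ge m$ for the eigenvalue $r$ equals $\dim\ker(\,\cdot\, - r\mathbf I)^m - \dim\ker(\,\cdot\, - r\mathbf I)^{m-1}$, these identities determine, eigenvalue by eigenvalue, the entire Jordan block structure. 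Hence $C_1$ and $C_2$ share the same eigenvalues ($k$ and $\lambda_i = \mu_i$ agree), the same multiplicities $n_i$, and the same partitions $\boldsymbol\pi^i = \boldsymbol\sigma^i$; that is, the parameters coincide. Thus distinct parameter choices give dilations inequivalent up to linear transformations, proving the correspondence is $1$-to-$1$.

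The only real work is bookkeeping: I must verify that the two normalizations can be performed compatibly --- Lemma \ref{l6.7} to secure positive eigenvalues and the one-parameter-group rescaling to secure $\det = 2$ --- with each preserving both expansiveness and positivity, so that the conjugation at the end genuinely lands in the family \eqref{s4}. The one subtlety worth flagging is the boundary of the eigenvalue range: $\lambda_i > 1$ together with $\prod_i \lambda_i^{n_i} = 2$ forces $\lambda_i < 2$ except in the degenerate case $n = 1$, where the single eigenvalue equals $2$; apart from this edge case the produced eigenvalues satisfy the constraint \eqref{s2}. Everything else is the standard fact that the numbers $\dim\ker(B - r\mathbf I)^m$ are a complete invariant for the Jordan structure at $r$.
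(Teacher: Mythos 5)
Your proof is correct and follows essentially the same route as the paper's: reduce to positive eigenvalues via Lemma \ref{l6.7}, rescale along the one-parameter group to determinant $2$, pass to Jordan normal form for existence, and invoke Theorem \ref{t10.10} with the $\dim\ker(\,\cdot\,-r\mathbf I)^m$ invariants for injectivity. Your observation about the degenerate case $n=1$ (where the single eigenvalue is forced to equal $2$, touching the boundary of \eqref{s2}) is a fair point the paper glosses over, but it does not affect the argument.
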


\begin{proof} By Theorem \ref{thm:Construct-A_t}, we can assume replace $A$ by an equivalent dilation $B$ with all positive eigenvalues. In addition, by rescaling we can assume that $\det B=2$. Then, $B$ has a Jordan normal form \eqref{s4} for some eigenvalues \eqref{s2} and the corresponding Jordan blocks of sizes given by partitions \eqref{s3}. Now, if we modify any of the partitions $\boldsymbol \pi^i$, $i=1,\ldots,k$, or any of the eigenvalues $\lambda_i$, then we necessarily change the value of 
\[
\dim \ker(A(\lambda_1,\ldots,\lambda_k; \boldsymbol \pi^1 , \ldots, \boldsymbol \pi^k ) - \la \mathbf I)^m
\]
for some $\lambda>1$ and $m=1,2,\ldots$  Theorem \ref{t10.10} guarantees that two different dilations of the form $A(\lambda_1,\ldots,\lambda_k; \boldsymbol \pi^1 , \ldots, \boldsymbol \pi^k )$ are not mutually equivalent up to linear transformation.
\end{proof}

Recall that two anisotropic Hardy spaces $H^p_{A_1}(\R^n)$ and $H^p_{A_2}(\R^n)$ are equivalent up to linear transformations if there exists an invertible $n\times n$ matrix $S$ such that the dilation operator $f \mapsto f(S^{-1}\cdot)$ defines an isomorphism between $H^p_{A_1}(\R^n)$ and $H^p_{A_2}(\R^n)$, see \cite[Definition 10.9]{MR1982689}. This happens precisely if $A_1$ and $S^{-1}A_2S$ are equivalent dilations, see the proof of \cite[Theorem 10.10]{MR1982689}. As a consequence of Corollary \ref{pn} dilations of the form \eqref{s4} classify anisotropic Hardy spaces $H^p_A(\R^n)$ up to linear transformations. 

\begin{thrm}
Suppose we have two dilations $A_1$ and $A_2$ on $\R^n$. The following are equivalent:
\begin{enumerate}
\item
$A_1$ and $S^{-1}A_2S$ are equivalent for some $n\times n$ invertible matrix $S$, 
\item
\eqref{cdim} holds for all $r>1$ and $m=1,2,\ldots$,
\item 
$H^p_{A_1}(\R^n)$ and $H^p_{A_2}(\R^n)$ are equivalent up to linear transformations for all $0<p\le 1$,
\item
$H^p_{A_1}(\R^n)$ and $H^p_{A_2}(\R^n)$ are equivalent up to linear transformations for some $0<p\le 1$.
\end{enumerate}
\end{thrm}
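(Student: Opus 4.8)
The plan is to prove the four-way equivalence by establishing the cycle $(i)\Leftrightarrow(ii)$ and then $(i)\Leftrightarrow(iii)\Leftrightarrow(iv)$, leaning on results already assembled in the paper. The equivalence $(i)\Leftrightarrow(ii)$ is exactly the content of Theorem \ref{t10.10}, since condition $(i)$ here is literally the statement that $A_1$ is equivalent to $S^{-1}A_2S$ for some $S\in GL_n(\R)$, which the proof of Theorem \ref{t10.10} already identifies with the equivalence of the quasi-norms $\rho_1$ and $\rho_2$ up to a linear transformation. So nothing new is needed for that half beyond citing Theorem \ref{t10.10}.

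The real work is connecting the geometric/algebraic conditions to the Hardy-space isomorphism statements $(iii)$ and $(iv)$. First I would recall the definition (from \cite[Definition 10.9]{MR1982689}) that $H^p_{A_1}$ and $H^p_{A_2}$ are equivalent up to linear transformations precisely when the dilation operator $f\mapsto f(S^{-1}\cdot)$ is an isomorphism between them for some invertible $S$. The key observation, already noted in the excerpt immediately preceding this theorem, is that $f\mapsto f(S^{-1}\cdot)$ maps $H^p_{A_2}$ isomorphically onto $H^p_{S^{-1}A_2S}$: this is a change-of-variables computation showing that the maximal function defining $H^p_{A_2}$ transforms into the maximal function defining $H^p_{S^{-1}A_2S}$ under composition with $S^{-1}$. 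Granting this, $(iii)$ (resp. $(iv)$) for a given $p$ becomes equivalent to the statement that $H^p_{A_1}=H^p_{S^{-1}A_2S}$ as anisotropic Hardy spaces for some $S$, for all (resp. some) $0<p\le 1$.

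At this point the bridge to $(i)$ is Theorem \ref{eqq}: two dilations $A_1$ and $S^{-1}A_2S$ are equivalent if and only if $H^p_{A_1}=H^p_{S^{-1}A_2S}$ for some $0<p\le1$, and in that case for all such $p$. Chaining these gives $(i)\Rightarrow H^p_{A_1}=H^p_{S^{-1}A_2S}$ for all $0<p\le 1$ $\Rightarrow (iii)\Rightarrow (iv)$, while $(iv)$ produces, for some single $p$, an $S$ with $H^p_{A_1}=H^p_{S^{-1}A_2S}$, whence Theorem \ref{eqq} returns the dilation equivalence in $(i)$. Thus the chain $(i)\Rightarrow(iii)\Rightarrow(iv)\Rightarrow(i)$ closes, and combined with $(i)\Leftrightarrow(ii)$ from Theorem \ref{t10.10} all four are equivalent.

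I expect the main obstacle to be the change-of-variables lemma that identifies $f\mapsto f(S^{-1}\cdot)$ as an isomorphism $H^p_{A_2}\to H^p_{S^{-1}A_2S}$, since the four-way equivalence is otherwise a formal chaining of Theorems \ref{eqq} and \ref{t10.10}. One must check that a quasi-norm $\rho_2$ for $A_2$ pulls back to a quasi-norm $\rho_2(S\cdot)$ for $S^{-1}A_2S$ (the homogeneity computation $\rho_2(S(S^{-1}A_2Sx))=|\det A_2|\rho_2(Sx)$ already appears in the proof of Theorem \ref{t10.10}) and that the test-function dilations and balls transform compatibly under $S$, so that the $L^p$ norm of the maximal function is preserved up to the Jacobian factor $|\det S|^{1/p}$, which is a harmless constant. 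This is routine but is the only place where genuinely analytic (rather than purely algebraic) content enters, and it is precisely the step the paper attributes to the proof of \cite[Theorem 10.10]{MR1982689}, so I would cite that reference rather than reprove it in full.
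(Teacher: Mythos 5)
Your proposal is correct and follows essentially the same route the paper takes: the paper states this theorem without a separate proof, precisely because it is the formal chaining of Theorem \ref{t10.10} (for $(i)\Leftrightarrow(ii)$), the observation preceding the theorem that equivalence of Hardy spaces up to linear transformations holds exactly when $A_1$ and $S^{-1}A_2S$ are equivalent dilations (citing the proof of \cite[Theorem 10.10]{MR1982689} for the change-of-variables step), and Theorem \ref{eqq} (for passing between ``some $p$'' and ``all $p$''). Your identification of the change-of-variables lemma as the only genuinely analytic ingredient, and your decision to cite rather than reprove it, matches the paper's treatment.
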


\begin{remark}
A similar classification result holds for homogeneous Besov spaces which were originally introduced in \cite{besov}. According to \cite[Corollary 6.5]{chesh-fuhr} two anisotropic Besov spaces associated to dilations $A_1$ and $A_2$ are the same if and only if quasi-norms corresponding to transposes $(A_1)^T$ and $(A_2)^T$ are equivalent. Consequently, one can show that (unweighted) anisotropic Besov spaces are classified up to linear transformations by dilations of the form \eqref{s4}.
\end{remark}

\begin{remark}
Cheshmavar and F\"uhr have introduced the concept of coarse equivalence of dilations in terms of their quasi-norms \cite{chesh-fuhr}. In light of Theorem \ref{t10.10}, it might be tempting to introduce a coarse equivalence of quasi-norms up to linear transformations. However, it is not difficult to show that this notion coincides with the above concept of equivalence up to linear transformation. Indeed, as a corollary of \cite[Theorem 7.9(b)]{chesh-fuhr}, any two coarsely equivalent dilations are equivalent up to linear transformation.
\end{remark} 

\begin{remark}
As another consequence of Corollary \ref{pn} we can deduce that for any choice of
\[
1<\lambda_1 < \ldots< \lambda_k<\infty, \ k\le n,
\]
there exist only finitely many equivalence classes (up to linear transformations) of $n\times n$ dilation matrices $A$ with above magnitudes of eigenvalues. Indeed, there exist only a finite choice of multiplicities \eqref{s1} with corresponding finite number of choices of partitions \eqref{s3} that produce the required representatives
\[
A(\lambda_1/c,\ldots,\lambda_k/c; \boldsymbol \pi^1 , \ldots, \boldsymbol \pi^k ),
\qquad\text{where }
c=(\lambda_1^{n_1} \cdots \lambda_k^{n_k})^{1/n}/2^{1/n}.
\]
\end{remark}

As a consequence, we obtain the following corollary.

\begin{cor}
The number of equivalence classes of anisotropic Hardy spaces $H^p_A(\R^n)$, $0<p\le 1$, up to linear transformations for dilations $A$, which have only one eigenvalue, equals the partition function $p(n)$.
\end{cor}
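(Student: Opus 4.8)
The plan is to reduce the statement to a direct application of Corollary \ref{pn}. The final corollary counts equivalence classes (up to linear transformations) of anisotropic Hardy spaces $H^p_A(\R^n)$ for dilations $A$ having only one eigenvalue. By the theorem immediately preceding it, such Hardy spaces are equivalent up to linear transformations precisely when the underlying dilations satisfy condition (i), equivalently when \eqref{cdim} holds. So the task is purely combinatorial: count the number of equivalence classes of single-eigenvalue dilations under the relation described in Corollary \ref{pn}.

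First I would invoke Corollary \ref{pn} to pass from an arbitrary single-eigenvalue dilation $A$ to a canonical representative of the form \eqref{s4}. The hypothesis that $A$ has only one eigenvalue $\lambda$ (possibly complex, or a conjugate pair) forces $k=1$ in the parametrization: there is a single magnitude among the eigenvalues. After rescaling so that $\det B = 2$, the normalization \eqref{s2} with $k=1$ reads $\lambda_1^{n_1} = 2$ with $n_1 = n$, so the eigenvalue magnitude is pinned down uniquely to $\lambda_1 = 2^{1/n}$. Thus the only remaining freedom in \eqref{s4} is the choice of a single partition $\boldsymbol \pi^1$ of $n_1 = n$, recording the sizes of the Jordan blocks.

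Next I would apply the 1-to-1 assertion of Corollary \ref{pn}: distinct choices of partitions $\boldsymbol \pi^1$ yield dilations that are \emph{not} equivalent up to linear transformations, while every single-eigenvalue dilation is equivalent to exactly one such representative. Therefore the equivalence classes are in bijection with the partitions of $n$, of which there are exactly $p(n)$ by definition of the partition function. Finally, chaining this with the preceding theorem, which identifies equivalence of the dilations with equivalence of the Hardy spaces up to linear transformations (for some, equivalently all, $0 < p \le 1$), transfers the count to the Hardy spaces themselves, giving $p(n)$ equivalence classes.

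I do not expect a serious obstacle here, since all the heavy lifting has already been done in Corollary \ref{pn} and the preceding theorem; the only point requiring care is verifying that the single-eigenvalue hypothesis genuinely collapses the parameter $k$ to $1$ and fixes $\lambda_1$, so that the sole surviving invariant is the block-size partition. One should also note that a single complex-conjugate pair of eigenvalues still counts as one magnitude, and that passing to the positive-eigenvalue representative $B$ via Lemma \ref{l6.7} preserves the Jordan block structure (each block maps to a block of the same size), so the partition of $n$ attached to $A$ is well-defined and is exactly the invariant being counted.
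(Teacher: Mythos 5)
Your proposal is correct and is essentially the argument the paper intends: the corollary is stated as an immediate consequence of Corollary \ref{pn} (single eigenvalue forces $k=1$, the normalization $\lambda_1^n=2$ pins down $\lambda_1=2^{1/n}$, and the only surviving invariant is the partition of $n$ recording Jordan block sizes), combined with the preceding theorem to transfer the count from dilations to Hardy spaces. The only cosmetic quibble is that a real matrix with ``only one eigenvalue'' necessarily has a real eigenvalue (a non-real one would bring its conjugate as a second eigenvalue), so the parenthetical about a conjugate pair is not needed, but this does not affect the argument.
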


\section{Further Connections between Hardy Spaces and PDEs} \label{S5}


In this section, we discuss open questions on the Hardy spaces associated with differential operators. The relationship between the anisotropic Hardy space and many variants of the Hardy spaces associated to operators \cite{MR3108869}, \cite{MR2163867}, \cite{MR2868142}, \cite{MR2481054} is not clear, given the time-component in the associated differential equation \eqref{eq:CT}. Related to this issue, we can formulate a new Hardy space adapted to the quasi-norm of a continuous group, which does have a natural formulation related to a pseudo-differential operator $L$. Denoting this second Hardy space by $H_L^p$, we ask if such a space is well-defined with respect to the norm used, and whether the semigroup $\{e^{-tL}\}_{t>0}$ satisfies Davies-Gaffney estimates. 

\subsection{Parabolic Setting} To set the context for these questions, we fix an expansive continuous group $\{ A_t \}$, which defines the associated parabolic differential equation \eqref{eq:CT}: 
	\begin{align*} \frac{\p u}{\p t} = \frac{1}{t} \cdot (D_t^{-1} \Delta D_t) u = L_t u.
	\end{align*}
In the frequency domain, the fundamental solution $\Phi$ is given in a simple form. 
\begin{prop}[{\cite[Section 1.3]{MR0417687}}] 
\label{fund}
Let ${\Phi} \in \mathcal{S}$ be defined by
		\begin{equation}
		\label{Phi} \hat{{\Phi}} (\xi) = \exp[-4\pi^2 \langle B \xi, \xi \rangle], 
		\end{equation}
		where $B$ is a norm-inducing matrix $B$ for $\{A_t^*\}$.
	Then $\tilde \Phi_t(x)=t^{-1} \Phi(A_t^{-1}x)$ satisfies the differential equation \eqref{eq:CT}. Moreover, if $f \in \mathcal{S}'$, then $u(x, t) = f \ast \tilde{\Phi}_t (x)$ also satisfies the same equation.
	\end{prop}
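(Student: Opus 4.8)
The plan is to verify the equation on the Fourier side, where the dilation structure is transparent. First I would identify $L_t=\frac{1}{t}(D_t^{-1}\Delta D_t)$ as a Fourier multiplier. A direct application of the chain rule shows that $D_t^{-1}\Delta D_t$ is the operator $\sum_{k,l}(A_tA_t^*)_{k,l}\,\p_k\p_l$ with coefficients depending on $t$ but constant in $x$, so with the convention $\hat f(\xi)=\int f(x)e^{-2\pi i x\cdot\xi}\,dx$ it acts on the Fourier side as multiplication by $-4\pi^2\langle A_tA_t^*\xi,\xi\rangle=-4\pi^2|A_t^*\xi|^2$. Consequently $L_t$ is multiplication by $-\tfrac{4\pi^2}{t}|A_t^*\xi|^2$.

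Next I would compute $\widehat{\tilde\Phi_t}$. The substitution $y=A_t^{-1}x$ together with $|\det A_t|=t$ gives $\widehat{\tilde\Phi_t}(\xi)=\hat\Phi(A_t^*\xi)=\exp[-4\pi^2\langle BA_t^*\xi,A_t^*\xi\rangle]$, and differentiating in $t$ is the heart of the matter. Since $B$ is the norm-inducing matrix for the \emph{adjoint} group $\{A_t^*\}$, whose generator is $P^*$, the defining relation \eqref{alg} reads $BP^*+PB=\mathbf I$; arguing exactly as in the identity displayed after \eqref{alg}, but now with $A_t^*$ in place of $A_t$, one gets
\[
\frac{d}{dt}\langle BA_t^*\xi,A_t^*\xi\rangle=\frac{1}{t}\langle(BP^*+PB)A_t^*\xi,A_t^*\xi\rangle=\frac{1}{t}|A_t^*\xi|^2.
\]
Substituting this back yields $\p_t\widehat{\tilde\Phi_t}(\xi)=-\tfrac{4\pi^2}{t}|A_t^*\xi|^2\,\widehat{\tilde\Phi_t}(\xi)$, which is precisely $\widehat{L_t\tilde\Phi_t}(\xi)$. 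Inverting the Fourier transform gives $\p_t\tilde\Phi_t=L_t\tilde\Phi_t$, the first assertion.

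For the convolution statement I would use that $L_t$ has coefficients depending on $t$ but not on $x$, hence commutes with convolution in the space variable: $\p_t(f\ast\tilde\Phi_t)=f\ast\p_t\tilde\Phi_t=f\ast L_t\tilde\Phi_t=L_t(f\ast\tilde\Phi_t)$ for every $f\in\mathcal{S}'$. The initial condition $u(\cdot,0)=f$ follows from $\widehat{\tilde\Phi_t}(0)=\hat\Phi(0)=1$, so that $\int\tilde\Phi_t=1$ for all $t$ and $\tilde\Phi_t$ is an approximate identity adapted to $\{A_t\}$ converging to $\delta$ in $\mathcal{S}'$ as $t\to0^+$; thus $f\ast\tilde\Phi_t\to f$ in $\mathcal{S}'$. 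The interchange of $\p_t$ with the convolution pairing and the limit are justified by the Schwartz decay of $\Phi$ and its derivatives.

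The only genuine subtlety, and the single place where the hypotheses must be read correctly, is that $B$ must be the norm-inducing matrix for $\{A_t^*\}$ rather than for $\{A_t\}$. This is forced by the appearance of $A_t^*$ after the Fourier transform, and it is exactly what makes the algebraic identity $BP^*+PB=\mathbf I$ cancel $B$ and reproduce the symbol of $L_t$; the remaining steps are a routine computation and a standard approximate-identity argument.
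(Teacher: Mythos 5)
Your proposal is correct and follows essentially the same route as the paper: pass to the Fourier side, use $\widehat{\tilde\Phi_t}(\xi)=\hat\Phi(A_t^*\xi)$, and invoke the identity $\frac{d}{dt}\langle BA_t^*\xi,A_t^*\xi\rangle=\frac{1}{t}|A_t^*\xi|^2$ coming from $BP^*+PB=\mathbf I$ for the adjoint group, which is exactly the paper's \eqref{eq:B-property}. The only differences are cosmetic: you make the multiplier computation for $D_t^{-1}\Delta D_t$ and the approximate-identity treatment of the initial condition explicit, while the paper instead adds a short ODE argument showing the converse (that any solution of the Fourier-side equation must be of the form \eqref{Phi}), which is not needed for the statement as written.
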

Proposition \ref{fund} is an elementary result, but we include the proof for completeness. Its proof does not require the assumption \eqref{CT} made in \cite{MR0417687}. However, it does require the following property of a norm-inducing matrix $B$ for $\{A_t^*\}$, which we state again: 
	\begin{align}\label{eq:B-property}
	\frac{d}{dt} \langle B A_t^* x, A_t^* x \rangle = \frac{1}{t} \langle (BP^* + PB) A_t^* x, A_t^* x \rangle =  \frac{1}{t} \langle  A_t^* x, A_t^* x \rangle. 
	\end{align}

	\begin{proof} If $\tilde \Phi_t$ is a solution for the PDE \eqref{eq:CT}, then by taking the Fourier transform we obtain
	\begin{align}\label{eq:PDE2}
\frac{\partial}{\partial t}[\hat{\Phi}(A_t^* \xi )] = \frac{-4\pi ^2 \langle A_t^* \xi , A_t^* \xi  \rangle}{t} \hat{\Phi}(A_t^* \xi).
	\end{align}
If $\Phi$ is given by \eqref{Phi}, then a simple calculation using \eqref{eq:B-property} shows that it satisfies \eqref{eq:PDE2}. Conversely, observe that for fixed $\xi \in \R^n$, \eqref{eq:PDE2} can be seen as an ordinary differential equation of the form
\[
\frac{d}{dt} h(t) =  \frac{-4\pi ^2 \langle A_t^* \xi , A_t^* \xi  \rangle}{t} h(t).
\]
Hence, its solution must be of the form $h(t) = c_0 e^{s(t)}$, where $s'(t) = -4\pi^2 \langle A_t^* \xi , A_t^* \xi \rangle/t$. By the property \eqref{eq:B-property}, we have $s(t) = (-4\pi^2) \langle B A_t^* \xi, A_t^* \xi \rangle$. Since $h(t)=\hat \Phi(A_t^*\xi)$, the fundamental solution $\Phi$ satisfies $\hat{\Phi}(\xi) = \exp[-4\pi^2 \langle B \xi, \xi \rangle]$. Consequently, if $f \in \mathcal{S}'$, then $F : \R^n \times (0, \infty)\rightarrow \C$, defined by $F(x, t) = f \ast \tilde{\Phi}_t(x)$, is also a solution to \eqref{eq:CT}.
	\end{proof}

Now observe that the parabolic PDE, associated with the operator $L_t = (D_t^{-1} \Delta D_t)/t$, depends on $t$, for which there is no viable semigroup theory. Indeed, if we naively set $T_t f = \tilde{\Phi}_{t} * f$, a computation with the Fourier transform gives
    \begin{align*}
	(T_t T_r f)^{\wedge}(\xi) = \exp(-4\pi^2 \langle \underbrace{A_{\frac{t}{t + r}} BA_t^* + A_{\frac{r}{t + r}} BA_r^*}_J )\xi, A_{t + r}^* \xi \rangle ).
	\end{align*}
We will have semigroup exactly if the $J$-term satisfies the identity 
$A_{\frac{t}{t + r}} BA_t^* + A_{\frac{r}{t + r}} BA_r^* = B A_{t + r}^*$, which is not likely. So we cannot make sense of the operation $e^{-t L_t} f$, nor use $H_{L_t}^p$ to denote $H_{\{ A_t \}}^p$. This leads to the following problem. 
Given a dilation matrix $A$ and its associated expansive group $\{ A_t \}$, and having established the equivalence between the anisotropic and parabolic Hardy spaces $H_A^p \simeq H_{\{ A_t \}}^p$, investigate the connection with the theory of Hardy associated with operators.

\subsection{Alternative Parabolic Approach}\label{sect:alternative}
By seeking a semigroup structure associated with the expansive group $\{ A_t \}$, we formulate an alternative definition of Hardy space. To do this, we fix a continuous group $\{ A_t \}_{t>0}$. Let $\tilde{\rho}_*$ be the quasi-norm associated with $\{A_t^*\}$, and define the alternative fundamental solution 
\begin{equation}\label{Phi2}
\hat{\Phi}(\xi) = \exp(-4\pi^2 \rho_*^2 (\xi)).
\end{equation}
Then we have a semigroup $T_t f = \tilde{\Phi}_{\sqrt{t}} \ast f(x)$, as made apparent by the Fourier transform and the homogeneity property $\tilde{\rho}_* (A_t^* \xi) = t \tilde{\rho}_* (\xi)$, 
        \begin{align*}
            \widehat{T_t T_s f} (\xi) 
                = \exp (-4\pi^2 \tilde{\rho}_*^2 (A_{\sqrt{t}}^* \xi) ) \exp (-4\pi^2 \tilde{\rho}_*^2 (A_{\sqrt{s}}^* \xi)) \hat{f}(\xi) &= \exp(-4\pi^2 \rho_*^2 (\xi \sqrt{s + t} )) 
                \\
                &= \widehat{T_{s + t} f} (\xi). 
        \end{align*}
Furthermore, $\Phi$ given by \eqref{Phi2} is the fundamental solution of $\p_t u = Lu$, defined in frequency by
	\begin{align}\label{eq:pde-alt}
	 \p_t \hat{u}(\xi) = \widehat{Lu}(\xi) = -4\pi^2 \tilde{\rho}_* (\xi)^2 \hat{u}(\xi).
	 \end{align}
The operator $L$ is the infinitesimal generator of the semigroup $\{T_t\}$, which is defined formally for $f$ in the domain of $L$, by $Lf = \lim_{t \rightarrow 0^+} \frac{T_t f - f}{t}$. 
Then we are now in a position to define the space $H_L^p$ as all tempered distributions $f \in \mathcal{S}'$ such that 
	\[ \mathcal{M}_{\Phi}^0 f = \sup_{t > 0} |e^{-tL} f| = \sup_{t > 0} |\tilde{\Phi}_{\sqrt{t}} \ast f| \in L^p. \] 
We can now attempt to place the pseudo-differential operator $L$ from \eqref{eq:pde-alt} among existing literature. We start with a basic question concerning the nature of these $H_L^p$ spaces. Given $\{ A_t \}$ and its dual $\{ A_t^* \}$, we can have more than one homogeneous norm $\rho^*$ to use in defining the fundamental solution $\hat{\Phi}$. For example, even in the diagonal case, fix $\delta > 0$, and define
	\[ A_t = 
			\begin{pmatrix}
			t^{\delta}		&	 	0 \\
			0									&		t^{\delta} 		
			\end{pmatrix}
	\text{ with }  
		 P = 
		\begin{pmatrix}
		\delta 	&		0 \\
		0					& \delta
		\end{pmatrix}.
	\]
Since $A_t = A_t^*$, we do not need to make a distinction between the homogeneous norms $\rho$ or $\rho^*$. 
Associated with this group are two natural choices of quasi-norms.
The first is the canonical quasi-norm, from solving the identity $|A_t^{-1} x| = 1$, which gives the norm $\tilde{\rho}_1 (x) = |x|^{1/\delta}$. The second is given by $\tilde{\rho}_2(x) = \sqrt{ |x_1|^{2/\delta} + |x_2|^{2/\delta}}$, so that it satisfies the homogeneity $\tilde{\rho}(A_t x) = t\tilde{\rho}(x)$. 
Their geometries do differ: $\tilde{\rho}_1 (x) = 1$ exactly when $x$ is on the boundary of the Euclidean unit ball, while $\tilde{\rho}_2 (x) = 1$ for $x$ on the boundary of $\ell^{2/\delta}$ unit ball given by $\| x \|_{\ell^{2/\delta}}^{2/\delta} = \sum_{j = 1}^2 |x_j|^{2/\delta} = 1$. 

A fundamental question is to consider whether the resulting Hardy spaces, from the two norms, agree. Consider the example when $\delta = 2$, with the homogeneous norms $\tilde{\rho}_1 (\xi) = |\xi|^{1/2}$ and $\tilde{\rho}_2 (\xi) = \sqrt{|\xi_1| + |\xi_2|}$. Their respective fundamental solutions are $ \widehat{\Phi^{(1)}} (\xi) = \exp(-4\pi^2 |\xi|)$ and $\widehat{\Phi^{(2)}} (\xi) = \exp(-4\pi^2 |\xi_1|) \exp(-4\pi^2 |\xi_2|)$, which under the inverse Fourier transform, are given by the Poisson kernel  $\Phi^{(1)} (x) = P_2(x)$ and the product of two Poisson kernels in $\R$, which we denote by $Q(x) = \Phi^{(2)} (x) = P_1(x_1) P_1(x_2)$, where $P_n(x)=c_n(1+|x|^2)^{-(n+1)/2}$. We proceed along the classical arguments, see \cite[Chapter III.1]{stein93} or \cite[Chapter 2.1]{MR2463316}. 

Observe that the first norm leads to the classical Hardy space. Denote the Hardy space associated with the kernel $Q$ to be $H_Q^p$, with the norm $\| f \|_{H_Q^p} = \| \mathcal{M}_Q^0 f \|_{p}$ for $f \in \mathcal{S}'$, defined formally for $f \in \mathcal{S}'(\R^2)$ for which $f \ast Q_s (x)$ is defined for $s > 0$.
We can readily establish the inclusion $H^p \subseteq H_Q^p$ by decomposing $Q$ along each variable, and obtain
	\begin{align*}
	Q(x) = Q_1 (x_1) Q_2 (x_2)
		&= \sum_{j, k = 0}^{\infty} 2^{-(k +j)} 2^{-k} \varphi^{(j, k)} \bigg(\frac{x_1}{2^j}, \frac{x_2}{2^k} \bigg),  
	\end{align*}
where $\varphi^{(j, k)}(x_1, x_2) = \Phi^{(k)}(x_1) \Phi^{(j)}(x_2)$, and $\Phi^{(k)}$ are smooth cutoff functions in $\R$ and  bounded in $\mathcal{S}(\R^2)$. Then we can majorize the radial maximal function of $Q$ with respect to a grand maximal function, and obtain the inclusion that $H^p \subseteq H_Q^p$. 

However, the reverse inclusion is unknown. Classically, one defines a test function by
$\Psi(x) = \int_1^{\infty} \eta(s) P_s (x) dx$, 
where $\eta$ is smooth on $[1, \infty)$, and satisfies
	\[ \int_1^{\infty} \eta(s) ds = 1, \qquad \int_1^{\infty} s^k \eta(s) ds = 0 \text{ for } k \in \N. 
\]
The $\Psi$ is shown to be in $\mathcal{S}$, and majorizes the maximal function associated with the Poisson kernel. However, when we use $Q$ in this construction, $\Psi$ cannot be shown to be smooth, given the lack of differentiability of $\hat{Q}$ along the $\xi_1$ and $\xi_2$ axes. This leaves open the question of whether we do have $H_Q^p \subseteq H^p$, or, more generally, if two homogeneous norms to the same continuous group leads to the same Hardy space. It is worth adding that if $p>1$, then the Hardy space $H^p$ and $H^p_Q$ actually coincide with $L^p$ by \cite[Chapter II.4]{stein93}. In general, the fact the Hardy spaces $H^p_L$ coincide with $L^p$ spaces for $p>1$ is related to the boundedness of maximal functions along curves due to Stein and Wainger \cite{stein1978}. However, the following problem remains open.

\begin{question}
 Given an essentially continuous expansive group $\{ A_t^* \}$, with two homogeneous quasi-norms $\tilde{\rho}_1^*$ and $\tilde{\rho}_2^*$, consider the resulting fundamental solution and PDE, given by the differential operators $L_1$ and $L_2$, respectively. Do they result in the same Hardy space, that is, $H_{L_1}^p = H_{L_2}^p$, $0<p \le 1$?
\end{question}

 Lastly, given $H_L^p$, fixed by a specific choice of $\tilde{\rho}$ associated with $\{ A_t^* \}$, we naturally inquire where in the present literature this Hardy space is placed. Given the vast literature, we can narrow this question to verifying the Davies-Gaffney estimate as follows. Let $\{ A_t \}$ be an expansive continuous group, and consider $(\R^n, \tilde{\rho}, dx)$ as a space of homogeneous type. We seek to determine if the fundamental solution $\Phi$ of $L$ satisfies the off-diagonal Davies-Gaffney estimate \cite[Assumption 2.2]{MR2868142}.

\begin{question} For open subsets $U_1, U_2 \subset \R^n$ and $f_1, f_2$ supported on $U_1, U_2$, will we have, for all $t > 0$, 
	\begin{align*}
	 |\langle e^{-t L} f_1, f_2 \rangle| \leq C \exp \left( - \frac{\operatorname{dist} (U_1, U_2)^2}{ct} \right) \| f_1 \|_2 \| f_2 \|_2, 
	 \end{align*}
where $e^{-t L} f_1 = \tilde{\Phi}_{\sqrt{t}} \ast f$ and $\operatorname{dist}(U_1, U_2) = \inf \{ \tilde{\rho}(x - y) : x_1 \in U_1, x_2 \in U_2 \}$?
\end{question}

\bibliographystyle{plain} 		
	\bibliography{Cite}

\end{document}